\theoremstyle{plain}
\newtheorem{theorem}{Theorem}[section]
\newtheorem{corollary}[theorem]{Corollary}
\newtheorem{lemma}[theorem]{Lemma}
\newtheorem{proposition}[theorem]{Proposition}
\newtheorem{conjecture}[theorem]{Conjecture}
\theoremstyle{definition}
\newtheorem{definition}[theorem]{Definition}
\newtheorem{remark}[theorem]{Remark}
\newtheorem{example}[theorem]{Example}
\newtheorem{property}[theorem]{Property}
\newcommand{\N}{{\mathbb{N}}}     
\newcommand{\Q}{{\mathbb{Q}}}     
\newcommand{\R}{{\mathbb{R}}}
\newcommand{\kk}{{\mathbf{k}}}     
\newcommand{\id}{\operatorname{id}}         % identity     
\begin{document}

\address{Faculte des Sciences\\
2 Boulevard Lavoisier,
49045 Angers Cedex 01, France\\
email:\,\tt{luc.menichi@univ-angers.fr}
}

\title{Rational homotopy -- Sullivan models}

\author{Luc Menichi}
%\author{First author\thanks{Work partially supported by SNF Grant No.~yy-63821.xx} and
%Second author\thanks{Work partially
%supported by SNF Grant No.~xx-65213.yy}}

\maketitle

\begin{abstract}
This chapter is a short introduction to Sullivan models. In
particular, we find the Sullivan model of a free loop space and use
it to prove the Vigu\'e-Poirrier-Sullivan theorem on the Betti numbers of a free loop space.
\end{abstract}

%\begin{classification}
%58D05, 58F07; 35Q53.
%\end{classification}

%\begin{keywords}
%Flow completion, Burgers equation, manifolds of mappings.
%\end{keywords}

\vskip 1cm
In the previous chapter, we have seen the following theorem due
to Gromoll and Meyer.
\begin{theorem}
Let $M$ be a compact simply connected manifold.
If the sequence of Betti numbers of the free loop space on $M$,
$M^{S^1}$, is unbounded then any Riemannian metric on $M$ carries
infinitely many non trivial and geometrically distinct closed geodesics.
\end{theorem}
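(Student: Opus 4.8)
The plan is to prove the contrapositive by means of $S^1$-equivariant Morse theory for the energy functional on the free loop space, following the scheme of Morse that Gromoll and Meyer adapted to the closed-geodesic problem. First I would replace the smooth free loop space $M^{S^1}$, to which it is homotopy equivalent, by the Hilbert manifold $\Lambda M$ of loops of Sobolev class $H^1$, and fix an arbitrary Riemannian metric. On $\Lambda M$ the energy $E(\gamma)=\frac{1}{2}\int_{S^1}|\dot\gamma|^2\,dt$ is smooth, bounded below, and satisfies the Palais--Smale condition, so that global Morse theory applies. Its critical points are exactly the constant loops, which form a critical submanifold diffeomorphic to $M$ at the level $E=0$, together with the closed geodesics parametrised proportionally to arc length. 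As $E$ is invariant under the circle action rotating the loop parameter, each nonconstant closed geodesic appears as an $S^1$-orbit of critical points, and with all of its iterates $\gamma,\gamma^2,\gamma^3,\dots$ it produces a whole tower of critical orbits sitting at the energy levels $m^2E(\gamma)$.

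Now assume, contrary to the desired conclusion, that the chosen metric admits only finitely many geometrically distinct nonconstant closed geodesics $\gamma_1,\dots,\gamma_r$. The goal is to deduce that the Betti numbers $b_k(\Lambda M)=\dim_\Q H_k(\Lambda M;\Q)$ are bounded in $k$, which is precisely the negation of the hypothesis, so that the theorem follows by contraposition. The subadditive (weak) Morse inequalities in this Morse--Bott, equivariant setting bound $b_k(\Lambda M)$ by the sum, over all critical orbits, of the ranks of their local homology groups in degree $k$. The constant loops contribute only $H_*(M;\Q)$, which is finite and supported in degrees at most $\dim M$, so the whole problem reduces to estimating the total contribution of the iterate towers of the finitely many geodesics $\gamma_i$.

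The decisive step is the iteration estimate, supplied by Bott's theory of the behaviour of the Morse index under iteration. For each $\gamma_i$ the nullity of $\gamma_i^{\,m}$ is bounded by $2(\dim M-1)$ uniformly in $m$, so by the Gromoll--Meyer splitting analysis of a degenerate critical orbit the rank of the local homology of each $\gamma_i^{\,m}$ is bounded by a constant independent of $m$, and this homology is concentrated in the degrees between the index $\lambda(\gamma_i^{\,m})$ and $\lambda(\gamma_i^{\,m})$ plus the nullity. Bott's formulas show that, as $m$ grows, $\lambda(\gamma_i^{\,m})$ either stays bounded or else grows linearly with positive mean index: in the first case no homology is produced in high degrees at all, while in the second case each fixed degree $k$ lies in the contributing window of only boundedly many iterates. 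Either way the iterates of a single $\gamma_i$ contribute to $H_k(\Lambda M)$ a rank bounded independently of $k$; summing the finitely many such contributions with that of the constant loops yields a uniform bound on every $b_k(\Lambda M)$, the sought contradiction.

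The hard part is exactly this iteration estimate, and it is where all the genuine geometry sits. One geometrically distinct geodesic generates infinitely many critical orbits, and the crux is to show they cannot accumulate in a single homological degree; this requires Bott's index formulas for iterates together with a careful treatment of the degeneracies, of the equivariant local homology around a possibly non-isolated critical $S^1$-orbit, and of the dichotomy between bounded and linearly growing index. By contrast, the global Morse-theoretic bookkeeping and the passage to the contrapositive are soft. Once the estimate is established, unbounded Betti numbers of $M^{S^1}$ can no longer be accounted for by finitely many geodesics, and infinitely many geometrically distinct closed geodesics must therefore exist.
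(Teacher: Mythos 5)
Be aware first that the paper you were given does not prove this statement at all: it is the Gromoll--Meyer theorem, quoted from the \emph{previous} chapter of the volume purely as motivation, and the present chapter proves only the complementary, purely algebraic side of the story, namely exactly when the Betti numbers of $M^{S^1}$ over a field of characteristic $0$ are unbounded (Theorem~\ref{nombres de betti lacets libres pas bornes} and Proposition~\ref{monogene donne Betti bornes}). So there is no proof in the paper to compare yours against, and your proposal must be judged on its own terms. On those terms, your outline is the standard Gromoll--Meyer argument, and its architecture is sound: pass to the Hilbert manifold of $H^1$-loops, use the Palais--Smale property of the energy, argue by contraposition, and show that finitely many geometrically distinct geodesics together with their iterate towers can only account for bounded Betti numbers, by combining Bott's iteration formulas with control of the local homology of each critical orbit. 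The global bookkeeping is correct: if the mean index of $\gamma_i$ vanishes, the whole tower contributes only in a bounded range of degrees, while if it is positive, each fixed degree $k$ meets the window $[\lambda(\gamma_i^m),\lambda(\gamma_i^m)+\nu(\gamma_i^m)]$ for a number of iterates $m$ bounded independently of $k$.

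There is, however, one genuine gap in the justification as you wrote it: you claim that the uniform nullity bound $\nu(\gamma_i^m)\leq 2(\dim M-1)$, via the Gromoll--Meyer splitting lemma, bounds the rank of the local homology of $\gamma_i^m$ independently of $m$. That inference is false in general: an isolated critical point of a smooth function on a manifold of \emph{fixed} dimension can have arbitrarily large local homology --- already in dimension $2$, the origin for $f(z)=\mathrm{Re}(z^k)$ has local homology of rank $k-1$ in degree $1$ --- so bounding the dimension of the degenerate part is not enough. The uniform bound on the local homology along a tower of iterates is precisely the main technical theorem of Gromoll and Meyer, and it requires exploiting the relation between the critical orbits of $\gamma_i^m$ for different $m$: Bott's formula expresses $\nu(\gamma_i^m)$ in terms of those eigenvalues of the linearized Poincar\'e map that are $m$-th roots of unity, so the tower splits into finitely many classes of iterates with ``the same'' degenerate behaviour, and one proves that within each class the local homology groups coincide up to the shift by the index, hence take only finitely many values overall. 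You correctly flag this step as the hard part, which is the right instinct, but the mechanism you propose for it (nullity bound plus splitting lemma) does not suffice; it must be replaced by this finer iteration analysis. With that repair, your argument is complete and is exactly the published proof.
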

In this chapter, using Rational homotopy, we will see exactly when the sequence of Betti numbers of
$M^{S^1}$ over a field of caracteristic $0$ is bounded (See Theorem~\ref{nombres de betti lacets libres pas bornes}
and its converse Proposition~\ref{monogene donne Betti bornes}). This was one of the first major applications of rational homotopy.

Rational homotopy associates to any rational simply connected space, a commutative differential graded algebra.
If we restrict to almost free commutative differential graded
algebras, that is "Sullivan models", this association is unique.
\section{Graded differential algebra}
\subsection{Definition and elementary properties}
All the vector spaces are over $\Q$ (or more generally over a field $\kk$
of characteric $0$). We will denote by $\mathbb{N}$ the set of
non-negative integers.

\begin{definition}
A (non-negatively upper)
\index{graded!vector space}%
\emph{graded vector space} $V$ is a family $\{V^n\}_{n\in\N}$ of vector spaces.
An element $v\in V_i$ is an element of $V$ of \emph{degree} $i$. The degree of $v$ is denoted $\vert v\vert$.
A {\it differential} $d$ in $V$ is a sequence of linear maps $d^n:V^n\rightarrow V^{n+1}$
such that $d^{n+1}\circ d^{n}=0$, for all $n\in\N$.
A differential graded vector space or \emph{complex} is a graded vector space equipped with a differential.
A morphism of complexes $f:V\buildrel{\simeq}\over\rightarrow W$ is a
\index{quasi-isomorphism}%
\emph{quasi-isomorphism} if the induced map in homology $H(f):H(V)\buildrel{\cong}\over\rightarrow H(W)$ is an isomorphism
in all degrees.
\end{definition}
\begin{definition}
\index{graded!algebra}%
A \emph{graded algebra}  is a graded vector space $A=\{A^n\}_{n\in\N}$, equipped with a multiplication
$\mu:A^p\otimes A^q\rightarrow A^{p+q}$.
The algebra $Â$ is \emph{commutative} if $ab=(-1)^{\vert a\vert\vert b\vert}ba$ for all $a$ and $b\in A$.
\end{definition}
\begin{definition}
\index{dga}%
  A differential graded algebra or \emph{dga} is a graded algebra equipped with a differential
$d:A^n\rightarrow A^{n+1}$ which is also a \emph{derivation}: this means that 
for $a$ and $b\in A$
$$
d(ab)=(da)b+(-1)^{\vert a\vert}a(db).
$$
\index{cdga}%
A \emph{cdga} is a commutative dga.
\end{definition}
\begin{example}\label{example cdga}
1) Let $(B,d_B)$ and $(C,d_C)$ be two cdgas.
Then the tensor product $B\otimes C$ equipped with the multiplication
$$(b\otimes c)(b'\otimes c'):=(-1)^{\vert c\vert\vert b'\vert} bb'\otimes cc' $$
and the differential
$$
d(b\otimes c)=(db)\otimes c+(-1)^{\vert b\vert}b\otimes dc.
$$
is a cdga.
The {\it tensor product of cdgas} is the sum (or coproduct) in the category of cdgas.

2) More generally, let $f:A\rightarrow B$ and $g:A\rightarrow C$ be two morphisms of cdgas.
Let $B\otimes_A C$ be the quotient of  $B\otimes C$ by
the sub graded vector spanned by elements of the form $bf(a)\otimes c-b\otimes g(a)c$,
$a\in A$, $b\in B$ and $c\in C$. Then $B\otimes_A C$ is a cgda such that the quotient map
$B\otimes C\twoheadrightarrow B\otimes_A C$ is a morphism of cdgas.
The cdga $B\otimes_A C$ is the pushout of $f$ and $g$ in the category of cdgas:
$$
\xymatrix{
A\ar[r]^f\ar[d]_g
& B\ar[d]\ar@/^/[ddr]\\
C\ar[r]\ar@/_/[drr]
&B\otimes_A C\ar@{.>}[dr]|-{\exists!}\\
&&D
}$$

3) Let $V$ and $W$ be two graded vector spaces.
We denote by $\Lambda V$ the free graded commutative algebra on $V$.

If $V=\Q v$, i. e.  is of dimension $1$ and generated by a single element $v$,
then 

-$\Lambda V$ is $E(v)=\Q\oplus \Q v$, the exterior algebra on $v$ if the degree of $v$ is odd
and  

-$\Lambda V$ is $\Q [v]=\oplus_{n\in\N} \Q v^n$, the polynomial or symmetric algebra on $v$ if the degree of $v$ is even.

Since $\Lambda$ is left adjoint to the forgetful functor from the category of commutative graded algebras
to the category of graded vector spaces, $\Lambda$ preserves sums:
there is a natural isomorphism of commutative graded algebras
$\Lambda (V\oplus W) \cong\Lambda V\otimes \Lambda W$.

Therefore $\Lambda V$ is the tensor product $E(V^{odd})\otimes S(V^{even})$ of the exterior algebra on the generators of odd
degree and of the polynomial algebra on the generators of even degree.
\end{example}
\begin{definition}
Let $f:A\rightarrow B$ be a morphism of commutative graded algebras.
Let $d:A\rightarrow B$ be a linear map of degree $k$.
By definition, $d$ is a \emph{$(f,f)$-derivation} %~\cite{ATROUVER}
if
for $a$ and $b\in A$
$$
d(ab)=(da)f(b)+(-1)^{k\vert a\vert}f(a)(db).
$$
\end{definition}
\begin{property}[Universal properties]\label{proprietes universelles}

1) Let $i_B:B\hookrightarrow B\otimes\Lambda V$, $b\mapsto b\otimes 1$
and $i_V:V\hookrightarrow B\otimes\Lambda V$, $v\mapsto 1\otimes v$
be the inclusion maps.
Let $\varphi:B\rightarrow C$ be a morphism of commutative graded algebras.
Let $f:V\rightarrow C$ be a morphism of graded vector spaces.
Then $\varphi$ and $f$ extend uniquely to a morphism
$B\otimes \Lambda V\rightarrow C$
of commutative
graded algebras such that the following diagram commutes
$$\xymatrix{
B\ar[r]^\varphi\ar[dr]_{i_B}
&C
&V\ar[l]_f\ar[dl]^{i_V}\\
& B\otimes \Lambda V\ar@{.>}[u]|-{\exists!}
}
$$

2) Let $d_B:B\rightarrow B$ be a derivation of degree $k$.
Let $d_V:V\rightarrow B\otimes\Lambda V$ be a linear map of degree $k$.
Then there is a unique derivation $d$ such that the following diagram
commutes.

$$\xymatrix{
B\ar[r]^-{i_B}
&B\otimes \Lambda V
&V\ar[l]_{d_V}\ar[dl]^{i_V}\\
B\ar[r]^-{i_B}\ar[u]^{d_B}& B\otimes \Lambda V\ar@{.>}[u]|-{\exists!d}
}
$$

3) Let $f:\Lambda V\rightarrow B$ be a morphism of commutative graded algebras.
Let $d_V:V\rightarrow B$ be a linear map of degree $k$. Then there exists a unique $(f,f)$-derivation $d$ extending $d_V$:
$$
\xymatrix{
V\ar[r]^{d_V}\ar[d]_{i_V}
&B\\
\Lambda V\ar@{.>}[ur]_{\exists!d}
}$$
\end{property}
\begin{proof}
1) Since $\Lambda V$ is the free commutative graded algebra on $V$, $f$ can be extended to a morphism of graded algebras $\Lambda V\rightarrow C$. Since the tensor product of commutative graded algebras is the sum
in the category of commutative graded algebras, we
obtain a morphism of commutative graded algebras from $B\otimes \Lambda V$
to $C$.

2) Since $b\otimes v_1\dots v_n$ is the product $(b\otimes 1)(1\otimes v_1)
\dots (1\otimes v_n)$, $d(b\otimes v_1\dots v_n)$ is given by
$$
d_B(b)\otimes v_1\dots v_n
+\sum_{i=1}^n (-1)^{k(\vert b\vert+\vert v_1\vert+\dots+\vert v_{i-1}\vert)} (b\otimes v_1\dots v_{i-1})(d_V v_i) (1\otimes v_{i+1}\dots
v_n)
$$

3) Similarly, $d(v_1\dots v_n)$ is given by
$$
\sum_{i=1}^n (-1)^{k(\vert v_1\vert+\dots+\vert v_{i-1}\vert)} f(v_1)\dots f(v_{i-1})d_V(v_i) f(v_{i+1})\dots
f(v_n)
$$
\end{proof}
\subsection{Sullivan models of spheres}\label{modeles de Sullivan des spheres}
{\bf Sullivan models of odd spheres $S^{2n+1}$, $n\geq 0$.}

Consider a cdga $A(S^{2n+1})$ whose cohomology is isomorphic as graded algebras
to the cohomology of $S^{2n+1}$ with coefficients in $\kk$:
$$
H^*(A(S^{2n+1}))\cong H^*(S^{2n+1}).
$$
When $\kk$ is $\R$, you can think of $A$ as  the De Rham algebra of forms on $S^{2n+1}$.
There exists a cycle $v$ of degree $2n+1$ in  $A(S^{2n+1})$ such that
$$
H^*(A(S^{2n+1}))=\Lambda [v].
$$
The inclusion of complexes $(\kk v,0)\hookrightarrow A(S^{2n+1})$
extends to a unique morphism of cdgas $m:(\Lambda v,0)\rightarrow
A(S^{2n+1})$(Property~\ref{proprietes universelles}):
 
 $$
\xymatrix{
(\kk v,0)\ar[r]\ar[d]
& A(S^{2n+1}) \\
(\Lambda v,0)\ar@{.>}[ur]_{\exists!m}
}$$
The induced morphism in homology $H(m)$ is an isomorphism.
We say that $m:(\Lambda v,0)\buildrel{\simeq}\over\rightarrow
A(S^{2n+1})$ is a Sullivan model of $S^{2n+1}$

\noindent {\bf Sullivan models of even spheres $S^{2n}$, $n\geq 1$.}

Exactly as above, we construct a morphism of cdga
$m_1:(\Lambda v,0)\rightarrow A(S^{2n})$.
But now,  $H(m_1)$ is not an isomorphism:

$H(m_1)(v)=[v]$. Therefore $H(m_1)(v^2)=[v^2]=[v]^2=0$.
Since $[v^2]=0$ in $H^*(A(S^{2n}))$, there exists an element $\psi\in A(S^{2n})$
of degree $4n-1$ such that $d\psi=v^2$.

Let $w$ denote another element of degree $4n-1$.
The morphism of graded vector spaces $\kk v\oplus \kk w\hookrightarrow A(S^{2n})$, mapping $v$ to $v$ and $w$ to $\psi$
extends to a unique morphism of commutative graded algebras $m:\Lambda(v,w)\rightarrow A(S^{2n})$
 (1) of Property~\ref{proprietes universelles}):
$$
\xymatrix{
\kk v\oplus \kk w
\ar[r]\ar[d]
& A(S^{2n}) \\
\Lambda(v,w)\ar@{.>}[ur]_{\exists!m}
}$$

The linear map of degree $+1$, $d_V:V:=\kk v\oplus \kk w\rightarrow \Lambda(v,w)$
mapping $v$ to $0$ and $w$ to $v^2$ extends to a unique derivation
$d:\Lambda(v,w)\rightarrow \Lambda(v,w)$ (2) of Property~\ref{proprietes universelles}).
$$
\xymatrix{
\kk v\oplus \kk w\ar[r]^{d_V}\ar[d]
&\Lambda(v,w)\\
\Lambda(v,w)\ar@{.>}[ur]_{\exists!d}
}$$
Since $d$ is a derivation of odd degree, $d\circ d$ (which is equal to $1/2[d,d]$)
is again a derivation. The following diagram commutes
$$
\xymatrix{
V\ar[r]^{d_V}\ar[d]
&\Lambda V\ar[r]^{d}
&\Lambda V\\
\Lambda V\ar[urr]_{d\circ d}\ar[ur]^{d}
}
$$
Since the composite $d\circ d_V$ is null, by unicity
(2) of Property~\ref{proprietes universelles}),  the derivation $d\circ d$ is also null.
Therefore $(\Lambda V,d)$ is a cdga. This is the general method to check that $d\circ d=0$.

Denote by $d_A$ the differential on $A(S^{2n})$. 
Let's check now that $d_A\circ m=m\circ d$.
Since $d_A$ and $d$ are both $(id,id)$-derivations, $d_A\circ m$ and $m\circ d$
are both $(m,m)$-derivations.

Since
$d_A(m(v))=d_A(v)=0=m(0)=m(d(v))$ and
$d_A(m(w))=d_A(\psi)=v^2=m(v^2)=m(d(w))$,
$d_A\circ m$ and $m\circ d$ coincide on $V$. Therefore by unicity (3) of Property~\ref{proprietes universelles}), 
$d_A\circ m=m\circ d$. Again, this method is general.
So finally, we have proved that $m$ is a morphism of cdgas.
Now we prove that $H(m)$ is an isomorphism, by checking that $H(m)$ sends a basis to a basis.
\section{Sullivan models}
\subsection{Definitions}
Let $V$ be a graded vector space.
Denote by $V^+=V^{\geq 1}$ the sub graded vector space of $V$ formed by the elements of $V$
of positive degrees: $V=V^0\oplus V^+$.
\begin{definition}
\index{Sullivan model!relative}%
A \emph{relative Sullivan model} (or
\index{cofibration}%
\emph{cofibration} in the category of cdgas)
is a morphism of cdgas of the form
$$
(B,d_B)\hookrightarrow (B\otimes\Lambda V,d), b\mapsto b\otimes 1
$$
where

$\bullet$ $H^0(B)\cong \kk$,

$\bullet$ $V=V^{\geq 1}$,

$\bullet$ and $V$ is the direct sum of graded vector spaces $V(k)$:
$$
\forall n, V^n=\bigoplus_{k\in\mathbb{N}} V(k)^n
$$
such that $d:V(0)\rightarrow B\otimes \kk$
and $d:V(k)\rightarrow B\otimes \Lambda(V(<k))$.
Here $V(<k)$ denotes the direct sum $V(0)\oplus\dots\oplus V(k-1)$.
\end{definition}
Let $k\in\N$. Denote by $\Lambda^k V$ the sub graded vector space of  $\Lambda V$
generated by elements of the form $v_1\wedge\dots\wedge v_k$, $v_i\in V$.
Elements of $\Lambda^k V$ have by definition \emph{wordlength} $k$.
For example $\Lambda V=\kk\oplus V\oplus  \Lambda^{\geq 2}V$ .
 \begin{definition}
\index{Sullivan model!minimal}
A relative Sullivan model $(B,d_B)\hookrightarrow (B\otimes\Lambda V,d)$ is \emph{minimal} if 
$d:V\rightarrow B^+\otimes \Lambda V+ B\otimes \Lambda^{\geq 2}V$.
A \emph{(minimal) Sullivan model} is a (minimal) relative Sullivan model of the form
$(B,d_B)=(\kk,0)\hookrightarrow (\Lambda V,d)$.
\end{definition}
\begin{example}~\cite[end of the proof of Lemma 23.1]{Felix-Halperin-Thomas:ratht} 
Let $(\Lambda V,d)$ be cdga such that $V=V^{\geq 2}$.
Then $(\Lambda V,d)$ is a Sullivan model.
\end{example}
\begin{proof}[proof assuming the minimality condition]~\cite[p. 144]{Felix-Halperin-Thomas:ratht}
Suppose that 
$d:V\rightarrow\Lambda^{\geq 2}V$. In this case,
the $V(k)$ are easy to define:
let $V(k):=V^k$ for $k\in N$.
Let $v\in V^k$. By the minimality condition, $dv$ is equal to a sum $\sum_i x_iy_i$
where the non trivial elements $x_i$ and $y_i$ are both of positive length and therefore both of degre
$\geq 2$. Since $\vert x_i\vert+\vert y_i\vert=\vert dv\vert=k+1$, both 
$x_i$ and $y_i$ are of degree less than k. Therefore $dv$ belongs to
$\Lambda(V^{<k})=\Lambda(V(<k))$.
\end{proof}
\begin{property}
The composite of relative Sullivan models is again a Sullivan relative model.
\end{property}
\begin{definition}
\index{Sullivan model!of a cdga}
Let $C$ be a cdga.
A (minimal) \emph{Sullivan model of} $C$ is a (minimal) Sullivan model
$(\Lambda V,d)$ such that there exists a quasi-isomorphism of cdgas
$(\Lambda V,d)\buildrel{\simeq}\over\rightarrow C$.

Let $\varphi:B\rightarrow C$ be a morphism of cdgas.
A (minimal) \emph{relative Sullivan model of} $\varphi$ is a
(minimal) relative Sullivan model
$(B,d_B)\hookrightarrow (B\otimes \Lambda V,d)$ such that $\varphi$
can be decomposed as the composite of the relative Sullivan model and
of a quasi-isomorphism of cdgas:
$$
\xymatrix{
B\ar[r]^\varphi\ar[dr]
& C\\
& B\otimes\Lambda V\ar[u]_\simeq
}$$
\end{definition}
\begin{theorem}
Any morphism $\varphi:B\rightarrow C$ of cdgas admits a
minimal relative Sullivan model if $H^0(B)\cong \kk$, $H^0(\varphi)$
is an isomorphism and $H^1(\varphi)$ is injective.
\end{theorem}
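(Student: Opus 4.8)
The plan is to construct $(B\otimes\Lambda V,d)$ together with a quasi-isomorphism $m\colon(B\otimes\Lambda V,d)\to C$ satisfying $m\circ i_B=\varphi$ by induction on the degree, building the generators of $V$ degree by degree so that $V=\bigcup_{n\ge1}V^{\le n}$. The inductive hypothesis at stage $n$ is that we have produced a minimal relative Sullivan model $(B\otimes\Lambda V^{\le n},d)$ and a morphism of cdgas $m_n\colon B\otimes\Lambda V^{\le n}\to C$ with $m_n\circ i_B=\varphi$, such that $H^k(m_n)$ is an isomorphism for $k\le n$ and $H^{n+1}(m_n)$ is injective. The base case is $n=0$: since $V=V^{\ge1}$ there are no generators in degree $0$, so we take $V^{\le0}=0$ and $m_0=\varphi$, and the hypothesis then reads exactly as the three assumptions of the theorem, namely $H^0(B)\cong\kk$ (needed already for $B$ to be the base of a relative Sullivan model), $H^0(\varphi)$ an isomorphism, and $H^1(\varphi)$ injective.

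For the inductive step I would introduce the degree-$(n+1)$ generators in two batches. First, to repair surjectivity in degree $n+1$, I choose cocycles $c_\alpha\in C^{n+1}$ whose classes form a basis of $\coker H^{n+1}(m_n)$ and adjoin generators $a_\alpha$ of degree $n+1$ with $da_\alpha=0$ and $m(a_\alpha)=c_\alpha$; by (1) and (2) of Property~\ref{proprietes universelles} both the differential and the morphism extend uniquely. Second, to repair injectivity in degree $n+2$, I choose cocycles $z_\beta$ of degree $n+2$ in the algebra obtained so far whose classes form a basis of $\Ker H^{n+2}$; since each $m(z_\beta)$ is then a coboundary $d_C c_\beta$, I adjoin generators $b_\beta$ of degree $n+1$ with $db_\beta=z_\beta$ and $m(b_\beta)=c_\beta$, which keeps $m$ a chain map because $m(db_\beta)=m(z_\beta)=d_C c_\beta=d_C\, m(b_\beta)$. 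Setting $V^{n+1}=\langle a_\alpha\rangle\oplus\langle b_\beta\rangle$ and observing that $da_\alpha=0$ and $db_\beta=z_\beta$ both land in the previously constructed $B\otimes\Lambda V^{\le n}$, the filtration of $V$ by degree exhibits the inclusion as a relative Sullivan model, exactly as in the Example following the definition of minimal model.

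It then remains to verify two points. Minimality requires $d(V^{n+1})\subseteq B^+\otimes\Lambda V+B\otimes\Lambda^{\ge2}V$, i.e. that the representatives $z_\beta$ carry no linear part in $1\otimes V^{\le n}$; I would arrange this by adjusting each $z_\beta$, using that $H^{\le n+1}(m)$ is already an isomorphism to absorb any such linear term into the image of $d$. One must also check that the inductive hypotheses are genuinely restored: the first batch makes $H^{n+1}$ surjective, and combined with the prior injectivity yields an isomorphism in degree $n+1$ while leaving $H^{\le n}$ untouched, whereas the second batch makes $H^{n+2}$ injective. The delicate step to watch is that adjoining the $b_\beta$, which have nonzero differential in degree $n+1$, does not destroy the isomorphism just obtained in that degree. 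This last verification — a direct cohomology comparison of the complexes before and after adjoining each batch of generators — is the main obstacle, and it is where the choice of $z_\beta$ from $\Ker H^{n+2}$ and the injectivity hypotheses are really used. Finally I would pass to the colimit over $n$: with $V=\bigcup_n V^{\le n}$ and $m=\operatorname{colim} m_n$, the map $H(m)$ is an isomorphism in every degree, so $m$ is a quasi-isomorphism and $(B\otimes\Lambda V,d)$ is the desired minimal relative Sullivan model of $\varphi$.
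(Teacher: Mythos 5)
The step you yourself isolate as ``the main obstacle'' --- that one batch of degree-$(n+1)$ generators restores the inductive hypothesis ``$H^{n+1}$ isomorphism, $H^{n+2}$ injective'' --- is not just a deferred verification: it is false in the generality of the theorem, and this is where your proof breaks. The hypotheses allow $B^1\neq 0$ and $\coker H^1(\varphi)\neq 0$, so generators of degree $1$ are unavoidable; once degree-$1$ elements are present, the degree-$(n+2)$ component of $B\otimes\Lambda V^{\le n+1}$ contains genuinely new elements (products of degree-$1$ elements with the new generators, and for $n=0$ products of two new generators), which create new cocycles and new kernel classes. Concretely, take $B=\Q$ and $C=A_{PL}(S^1\vee S^1)$, so $H^1(\varphi)$ is (vacuously) injective. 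Your first stage adjoins closed generators $a_1,a_2$ of degree $1$ and then $b$ with $db=a_1a_2$ to kill $\Ker H^2$; but now $a_1b$ and $a_2b$ are non-exact cocycles of degree $2$ mapping into $H^2(S^1\vee S^1)=0$, so $H^2(m_1)$ is again non-injective, and every further batch of degree-$1$ generators re-creates kernel in degree $2$. No finite number of batches terminates (the minimal Sullivan model of $S^1\vee S^1$ has $V^1$ infinite-dimensional), so no induction indexed by degree can prove the statement as given. This is exactly why the paper does not argue this way: its proof is the citation of Proposition 14.3 of \cite{Felix-Halperin-Thomas:ratht}, which builds a (generally non-minimal) relative Sullivan model using a filtration $V(0)\subset V(1)\subset\cdots$ that is \emph{not} by degree --- each $V(k+1)$ contains generators of all degrees and kills the kernel created at stage $k$, exactness being achieved only in the colimit --- combined with Theorem 14.9, a structure theorem splitting such a relative Sullivan algebra as a minimal one tensor a contractible one, which is where minimality comes from.

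Your degreewise scheme is nevertheless sound, and is the method the paper recommends ``in practice'', exactly in the situation of \cite[Proposition 12.2]{Felix-Halperin-Thomas:ratht}: when $B^1=0$ and no degree-$1$ generators occur (e.g.\ $H^1(\varphi)$ an isomorphism and $H^2(\varphi)$ injective, as for the fibration models used in this paper), all products of new generators with positive-degree elements land in degrees $>n+2$ and the comparison you postponed goes through. Two smaller corrections in that setting. First, your minimality worry is vacuous: a linear term of $z_\beta$ would have to lie in $B^0\otimes(V^{\le n+1})^{n+2}=0$ for degree reasons, so any construction by induction on degree is automatically minimal; minimality is a genuine issue only for the non-degreewise filtration of Proposition 14.3, which is the point of Theorem 14.9. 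Second, since the $z_\beta$ are chosen in the algebra already containing the $a_\alpha$'s, $db_\beta$ lands in $B\otimes\Lambda\bigl(V^{\le n}\oplus\langle a_\alpha\rangle\bigr)$ rather than in $B\otimes\Lambda V^{\le n}$ as you assert; this is harmless provided you order the $a_\alpha$'s before the $b_\beta$'s in the filtration witnessing the Sullivan condition.
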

This theorem is proved in general by Proposition 14.3 and Theorem 14.9
of~\cite{Felix-Halperin-Thomas:ratht}. But in practice, if
$H^1(\varphi)$ is an isomorphism, we construct a minimal relative
Sullivan model, by induction on degrees as in Proposition 12.2.
of~\cite{Felix-Halperin-Thomas:ratht}.
\subsection{An example of  relative Sullivan model}\label{exemple de
  model relatif de Sullivan}
Consider the minimal Sullivan model of an odd sphere found in
section~\ref{modeles de Sullivan des spheres}

$$(\Lambda v,0)\buildrel{\simeq}\over\rightarrow
A(S^{2n+1}).$$
Assume that $n\geq 1$.
Consider the multiplication of $\Lambda v$: the morphism of cdgas
$$
\mu:(\Lambda v_1,0)\otimes (\Lambda v_2,0)\rightarrow (\Lambda v,0),
v_1\mapsto v, v_2\mapsto v.
$$
Recall that $v$, $v_1$ and $v_2$ are of degree $2n+1$.

\index{s like operator suspension}%
\noindent Denote by $sv$ an element of degree $\vert sv\vert =\vert s\vert+\vert
v\vert=-1+\vert v\vert$. The operator $s$ of degre $-1$ is called the
\emph{suspension}.

We construct now a minimal relative Sullivan model of $\mu$.
Define $d(sv)=v_2-v_1$.
Let $m:\Lambda(v_1,v_2,sv),d\rightarrow (\Lambda v,0)$
be the unique morphism of cdgas extending $\mu$ such that $m(sv)=0$.
$$
\xymatrix{
(\Lambda v_1,0)\otimes (\Lambda v_2,0)\ar[r]^-\mu\ar[dr]
& (\Lambda v,0)\\
& \Lambda(v_1,v_2,sv,d)\ar[u]_m
}$$
\begin{definition}
Let $A$ be a differential graded algebra such that $A^0=\kk$.
\index{indecomposables}%
The complex of indecomposables of $A$, denoted $Q(A)$, is the
quotient $A^+/\mu(A^+\otimes A^+)$.
\end{definition}
The complex of indecomposables of $(\Lambda v,0)$, $Q((\Lambda v,0))$,
is $(\kk v,0)$ while $$Q(\Lambda(v_1,v_2,sv,d))=(\kk v_1\oplus\kk v_2\oplus\kk sv,d(sv)=v_2-v_1).$$
The morphism of complexes $Q(m): (\kk v_1\oplus\kk v_2\oplus\kk sv,d(sv)=v_2-v_1)\rightarrow (\kk v,0)$ map $v_1$ to $v$, $v_2$ to $v$ and $sv$ to $0$.
It is easy to check that $Q(m)$ is a quasi-isomorphism of complexes.

By Proposition 14.13 of~\cite{Felix-Halperin-Thomas:ratht},
since $m$ is a morphism of cdgas between Sullivan model,
$Q(m)$ is a quasi-isomorphim of if and only if
$m$ is a quasi-isomorphism.

So we have proved that $m$ is a quasi-isomorphism and therefore
 $$(\Lambda v_1,0)\otimes (\Lambda v_2,0)\hookrightarrow\Lambda(v_1,v_2,sv,d)$$
is a minimal relative Sullivan model of $\mu$.
Consider the following commutative diagram of cdgas where the square is a
pushout
$$
\xymatrix{
&& \Lambda v,0\\
\Lambda (v_1,v_2),0\ar[urr]^\mu\ar[d]_\mu\ar[r]
& \Lambda(v_1,v_2,sv),d\ar[ur]_m^\simeq\ar[d]\\
\Lambda v,0\ar[r]
&\Lambda v,0\otimes_{\Lambda (v_1,v_2),0} \Lambda(v_1,v_2,sv),d
}
$$
It is easy to check that the cdga $\Lambda v,0\otimes_{\Lambda (v_1,v_2),0} \Lambda(v_1,v_2,sv),d$ is isomorphic to $\Lambda(v,sv),0$.
As we will explain later, we have computed in fact, the minimal Sullivan model
$\Lambda(v,sv),0$
of the free loop space $(S^{2n+1})^{S^1}$.
In particular, the cohomology algebra 
$H^*((S^{2n+1})^{S^1};\kk)$ is isomorphic to $\Lambda(v,sv)$.
We can deduce easily that for $p\in\N$,
$
\operatorname{dim} H^p((S^{2n+1})^{S^1})\leq 1
$.
So we have shown that the sequence of Betti numbers of the free loop space
on odd dimensional spheres is bounded.
\subsection{The relative Sullivan model of the multiplication}
\begin{proposition}~\cite[Example 2.48]{Felix-Oprea-Tanre:algmodgeom}\label{modele de Sullivan de la multiplication}
Let $(\Lambda V,d)$ be  a relative minimal Sullivan model with $V=V^{\geq 2}$ (concentrated in degrees $\geq 2$). Then the multiplication
$\mu: (\Lambda V,d)\otimes(\Lambda V,d)\twoheadrightarrow(\Lambda V,d)$
admits a minimal relative Sullivan model of the form
$(\Lambda V\otimes \Lambda V\otimes\Lambda sV,D)$.
\end{proposition}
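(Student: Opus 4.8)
The plan is to generalize verbatim the odd-sphere computation carried out above, where the relative model of $\mu$ was $\Lambda(v_1,v_2,sv)$ with $d(sv)=v_2-v_1$. Write $V_1,V_2$ for the two copies of $V$ inside the source $(\Lambda V,d)\otimes(\Lambda V,d)$, equipped with the tensor differential $\delta=d\otimes 1+1\otimes d$; for $w\in\Lambda V$ let $w_1,w_2$ denote its images under the two inclusions into $\Lambda V_1\otimes\Lambda V_2$. I adjoin one generator $sv$, of degree $\vert v\vert-1\geq 1$, for each generator $v$ of $V$, so that $sV$ is a single desuspended copy of $V$, and let $m\colon\Lambda V_1\otimes\Lambda V_2\otimes\Lambda sV\to\Lambda V$ be the morphism of commutative graded algebras extending $\mu$ with $m(sv)=0$ (unique by (1) of Property~\ref{proprietes universelles}). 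It then remains to define a differential $D$ agreeing with $\delta$ on $\Lambda V_1\otimes\Lambda V_2$ so that $m$ is a quasi-isomorphism and the inclusion $\Lambda V_1\otimes\Lambda V_2\hookrightarrow(\Lambda V_1\otimes\Lambda V_2\otimes\Lambda sV,D)$ is a minimal relative Sullivan model; the generating set is then automatically of the required form.

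I define $D$ on the new generators by induction along the filtration $V=\bigcup_k V(k)$ coming from the minimality of $(\Lambda V,d)$, setting
\[
D(sv)=v_2-v_1+c_v ,
\]
with $c_v$ a correction lying in the sub-cdga $A_{<k}$ generated by the variables of index $<k$. For $v\in V(0)$ one has $dv=0$ for degree reasons, so $c_v=0$ and $D^2(sv)=\delta(v_2)-\delta(v_1)=0$. In general $\delta(v_i)=(dv)_i$, so $D^2(sv)=0$ amounts exactly to
\[
D(c_v)=(dv)_1-(dv)_2 .
\]
The right-hand side is a $D$-cocycle, since $D((dv)_i)=(d^2v)_i=0$, and it lies in $A_{<k}$ because $dv\in\Lambda(V(<k))$.

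The heart of the matter is the existence of $c_v$, and with it the quasi-isomorphism of $m$; both follow once one shows that the inclusion of the first copy $\Lambda V_1\hookrightarrow(A,D)$ is a quasi-isomorphism. To see this I perform the triangular change of generators replacing $v_2$ by $t_v:=v_2-v_1$: then $D(sv)=t_v+c_v$ and $D(t_v)=(dv)_2-(dv)_1$ lie in the ideal generated by the $t_w$, so $c_v$ may be chosen there as well, and the linear part of $D(sv)$ over $\Lambda V_1$ is exactly $t_v$. Thus over $\Lambda V_1$ the added generators split into acyclic pairs $(t_v,sv)$, exactly as in the odd-sphere example, so $\Lambda V_1\hookrightarrow(A,D)$ is a quasi-isomorphism; composing with the isomorphism $\Lambda V_1\buildrel{\cong}\over\rightarrow\Lambda V$ shows that $m$ is a quasi-isomorphism. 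The same acyclicity, applied at stage $<k$, shows that $m_{<k}\colon A_{<k}\to\Lambda(V(<k))$ is a quasi-isomorphism, whence the cocycle $(dv)_1-(dv)_2$, which $m_{<k}$ sends to $dv-dv=0$, has vanishing cohomology class and is therefore the coboundary $D(c_v)$ we need. Keeping this acyclic-pair bookkeeping consistent through the induction is the main obstacle; it can also be packaged using the indecomposables criterion of Proposition 14.13 of~\cite{Felix-Halperin-Thomas:ratht}, as was done for odd spheres.

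It remains to check minimality. Since $(\Lambda V,d)$ is minimal, $dv\in\Lambda^{\geq 2}V$, so $(dv)_1-(dv)_2$ carries no linear term and $c_v$ can be taken in the augmentation ideal; consequently every summand of $D(sv)$ other than $v_2-v_1\in(\Lambda V_1\otimes\Lambda V_2)^+$ lies in $(\Lambda V_1\otimes\Lambda V_2)^+\otimes\Lambda sV+(\Lambda V_1\otimes\Lambda V_2)\otimes\Lambda^{\geq 2}sV$, which is precisely the minimality condition. Passing to the union over $k$ produces the desired minimal relative Sullivan model $(\Lambda V\otimes\Lambda V\otimes\Lambda sV,D)$ of $\mu$. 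Alternatively, one could first invoke the existence theorem stated above — its hypotheses $H^0=\kk$, $H^0(\mu)$ an isomorphism and $H^1(\mu)$ injective all holding trivially because $V=V^{\geq 2}$ — to obtain some minimal relative model, and then use the explicit construction only to identify the new generators with $sV$.
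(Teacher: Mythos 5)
Your construction follows the same inductive skeleton as the paper's constructive proof (adjoin $sV$, set $D(sv)=v_2-v_1+c_v$, induct along the degree filtration), but it rests on one claim that is false: that the correction $c_v$ ``may be chosen'' in the ideal generated by the elements $t_w=w_2-w_1$. Test it on the minimal model of $S^2$: $(\Lambda(x,y),d)$ with $\vert x\vert=2$, $\vert y\vert=3$, $dx=0$, $dy=x^2$. At the stage of $y$ you must solve $D(c_y)=x_1^2-x_2^2$ with $c_y$ of degree $3$ in $A_{<k}=\Lambda(x_1,x_2,sx)$, where $D(sx)=x_2-x_1$. The only degree-$3$ elements are $a\,x_1sx+b\,x_2sx$, and $D(a\,x_1sx+b\,x_2sx)=a(x_1x_2-x_1^2)+b(x_2^2-x_1x_2)$ forces $a=b=-1$; so $c_y=-(x_1+x_2)sx=-2x_1sx-t_xsx$ is the \emph{unique} admissible correction, and it does not lie in the ideal $(t_x)$: its image in $\Lambda(x_1,sx)\cong\Lambda(x_1,x_2,sx)/(t_x)$ is $-2x_1sx\neq 0$. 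This is not a minor slip, because the ideal membership is load-bearing three times in your write-up: it is your only guarantee that $m(c_v)=0$, without which $m$ is not even a chain map (you need $m(D(sv))=m(c_v)$ to equal $d\,m(sv)=0$); it is what you invoke for ``the linear part of $D(sv)$ over $\Lambda V_1$ is exactly $t_v$''; and your minimality argument leans on it as well --- membership in the augmentation ideal, which is all you assert at the end, does not exclude a summand $\lambda\,sw\in\kk\otimes\Lambda^1 sV$ from $c_v$.

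The repair is exactly the device your proof skirts around, and it is the heart of the paper's argument: choose $c_v$ in $\Ker m_{<k}$, the ideal generated by the $t_w$ \emph{together with} the $sw$. Since $m_{<k}$ is a \emph{surjective} quasi-isomorphism (inductive hypothesis), the long exact sequence associated to $0\to\Ker m_{<k}\to A_{<k}\to\Lambda(V(<k))\to 0$ shows that $\Ker m_{<k}$ is acyclic; the cocycle $(dv)_1-(dv)_2$ lies in it, hence bounds \emph{inside} $\Ker m_{<k}$. This produces $Dc_v=(dv)_1-(dv)_2$ and $m(c_v)=0$ simultaneously (this $c_v$ is the paper's $\gamma$, chosen there so that $\varphi_n(\gamma)=0$). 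The vanishing of the linear part of $c_v$ and the minimality then come not from ideal membership but for degree reasons: $c_v$ has degree $\vert v\vert$ while every generator $t_w$, $sw$ available in $A_{<k}$ has degree $<\vert v\vert$, so $c_v$ can have no linear component (the paper's ``for degree reasons, $\gamma$ is decomposable''). With these corrections your linear-part/acyclic-pairs argument that $\Lambda V_1\hookrightarrow(A,D)$ is a quasi-isomorphism can be carried through, or replaced --- as the paper does, and as you yourself suggest --- by the indecomposables criterion of Proposition 14.13 of~\cite{Felix-Halperin-Thomas:ratht}. Finally, your closing alternative (invoke the general existence theorem, then ``identify the new generators with $sV$'') is not a shortcut: that theorem yields some minimal relative model $\Lambda V\otimes\Lambda V\otimes\Lambda W$, and the identification $W\cong sV$ is precisely what the explicit construction is needed to prove.
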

\begin{proof}[Constructive proof]
We proceed by induction on $n\in\mathbb{N^*}$ to construct
quasi-isomorphisms of cdgas $\varphi_n:(\Lambda V^{\leq n}\otimes \Lambda V^{\leq n}\otimes\Lambda sV^{\leq n},D)
\buildrel{\simeq}\over\twoheadrightarrow(\Lambda V^{\leq n},d)$
extending the multiplication on $\Lambda V^{\leq n}$.

Suppose that $\varphi_n$ is constructed.
We now define $\varphi_{n+1}$ extending $\varphi_n$ and $\mu$, the multiplication on $\Lambda V$.
Let $v\in V^{n+1}$. Then $d(v)\in\Lambda^{\geq 2}(V^{\leq n})$
and $\varphi_n(dv\otimes 1\otimes 1-1\otimes dv\otimes 1)=0$.
Since $\varphi_n$ is a surjective quasi-isomorphism, by the long exact sequence associated
to a short exact sequence of complexes, $\text{Ker }\varphi_n$ is acyclic.
Therefore since $dv\otimes 1\otimes 1-1\otimes dv\otimes 1$ is a cycle, there exists
an element $\gamma$ of degree $n+1$ of $\Lambda V^{\leq n}\otimes \Lambda V^{\leq n}\otimes\Lambda sV^{\leq n}$ such that $D(\gamma)=dv\otimes 1\otimes 1-1\otimes dv\otimes 1$ and $\varphi_n(\gamma)=0$. For degree reasons, $\gamma$ is decomposable, i. e. has wordlength $\geq 2$.
We define $D(1\otimes 1\otimes sv)=v\otimes 1\otimes 1-1\otimes v\otimes 1-\gamma$ and $\varphi_{n+1}(1\otimes 1\otimes sv)=0$.
Since $D\circ D(1\otimes 1\otimes sv)=0$ and $d\circ\varphi_{n+1}(1\otimes 1\otimes sv)=\varphi_{n+1}\circ d(1\otimes 1\otimes sv)$, by Property~\ref{proprietes universelles}, the derivation $D$ is a differential on
$\Lambda V^{\leq n+1}\otimes \Lambda V^{\leq n+1}\otimes\Lambda sV^{\leq n+1}$
and the morphism of graded algebras $\varphi_{n+1}$ is a morphism of complexes.

The complex of indecomposables of $(\Lambda V^{\leq n+1}\otimes \Lambda V^{\leq n+1}\otimes\Lambda sV^{\leq n+1},D)$,
$$Q((\Lambda V^{\leq n+1}\otimes \Lambda V^{\leq n+1}\otimes\Lambda sV^{\leq n+1},D)$$
is $(V^{\leq n+1}\oplus V^{\leq n+1}\oplus sV^{\leq n+1},d)$ with differential
$d$ given by
$d(v'\oplus v"\oplus sv)=v\oplus -v\oplus 0$ for $v'$, $v"$ and $v\in
V^{\leq n+1}$.
Therefore it is easy to check that $Q(\varphi_{n+1})$ is a quasi-isomorphism.
So by Proposition 14.13 of~\cite{Felix-Halperin-Thomas:ratht},
$\varphi_{n+1}$ is a quasi-isomorphism.
Since $\gamma$ is of degree $n+1$ and $sV^{\leq n}$ is of degree $<n$, this relative Sullivan model is minimal.
We now define $\varphi:(\Lambda V\otimes \Lambda V\otimes\Lambda sV,D)
\twoheadrightarrow(\Lambda V,d)$ as
$$\displaystyle\lim_{\longrightarrow}\varphi_n=
\bigcup_{n\in\mathbb{N}}\varphi_n: \bigcup_{n\in\mathbb{N}} \left(\Lambda V^{\leq n}\otimes \Lambda V^{\leq n}\otimes\Lambda sV^{\leq n}\right)\rightarrow\bigcup_{n\in\mathbb{N}}\Lambda V^{\leq n}.$$
Since homology commutes with direct limits in the category of complexes~\cite[Chap 4, Sect 2, Theorem 7]{Spanier:livre},
$H(\varphi)=\displaystyle\lim_{\longrightarrow}H(\varphi_n)$ is an isomorphism.
\end{proof}
\section{Rational homotopy theory}
Let $X$ be a topological space. Denote by $S^*(X)$ the singular
cochains of $X$ with coefficients in $\kk$. The dga  $S^*(X)$ is
almost never commutative. Nevertheless, Sullivan, inspired by Quillen
proved the following theorem.
\begin{theorem}~\cite[Corollary 10.10]{Felix-Halperin-Thomas:ratht}\label{quasi-isos entre A_PL et les cochaines}
For any topological space $X$,
there exists two natural quasi-isomorphisms of dgas
$$
S^*(X)\buildrel{\simeq}\over\rightarrow D(X)\buildrel{\simeq}\over\leftarrow A_{PL}(X)
$$
\index{$A_{PL}$}
where $A_{PL}(X)$ is commutative.
\end{theorem}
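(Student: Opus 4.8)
The plan is to realize both $S^*$ and $A_{PL}$ as arising from \emph{simplicial cochain algebras} and to compare them by a single acyclic-models argument. First I would fix the simplicial commutative cochain algebra $A_{PL}=\{A_{PL,n}\}_{n\ge 0}$ of polynomial differential forms on the standard simplices, where $A_{PL,n}=\Lambda(t_0,\dots,t_n,y_0,\dots,y_n)/(\textstyle\sum t_i-1,\ \sum y_i)$ with $|t_i|=0$, $|y_i|=1$ and $d(t_i)=y_i$, the faces and degeneracies being induced by the affine maps between simplices; this is commutative at each level by construction. For a space $X$ with singular simplicial set $\mathrm{Sing}\,X$, I set $A_{PL}(X)=\Hom_{\mathrm{sSet}}(\mathrm{Sing}\,X,A_{PL})$, the cochain algebra of compatible families of forms, and I recall that the normalized singular cochains $S^*(X)$ are obtained in the same way from the simplicial cochain algebra $C^*=\{C^*(\Delta^n)\}$ of normalized simplicial cochains. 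Both functors land in dgas, and commutativity of $A_{PL}(X)$ is inherited level by level.

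The heart of the argument is a reduction to a single simplex. I would prove the following \emph{extendability lemma}: if $A$ and $B$ are extendable simplicial cochain algebras and $\phi\colon A\to B$ is a morphism which is a quasi-isomorphism on each standard simplex $\Delta^n$, then $\phi(K)\colon A(K)\to B(K)$ is a quasi-isomorphism for every simplicial set $K$. Here \emph{extendable} means that every form (resp. cochain) prescribed compatibly on the boundary $\partial\Delta^n$ extends over $\Delta^n$; this is exactly what forces the restriction maps $A(\mathrm{sk}_nK)\to A(\mathrm{sk}_{n-1}K)$ to be surjective. The proof runs by induction on skeleta: writing $K$ as the increasing union of its skeleta and each $\mathrm{sk}_nK$ as a pushout attaching $n$-simplices along their boundaries, extendability yields short exact sequences of complexes whose kernels are products of copies of the reduced complex of $(\Delta^n,\partial\Delta^n)$; the associated long exact sequences and the five lemma propagate the isomorphism up the filtration, and passage to the direct limit (homology commuting with direct limits, as already used above) finishes the induction. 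This lemma reduces the whole theorem to acyclicity computations on one simplex.

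Because there is no natural cdga quasi-isomorphism directly in either direction between $A_{PL}$ and $C^*$, I would introduce a common target: the simplicial cochain algebra $D$ whose value $D(X)$ is the total complex of the normalized cochains of $\mathrm{Sing}\,X$ with coefficients in the simplicial cochain algebra $A_{PL}$. It receives a natural map from $S^*(X)$, induced by the unit $\kk\hookrightarrow A_{PL,\bullet}$ (constant forms), and a natural map from $A_{PL}(X)$, sending a compatible family of forms to its representation as an $A_{PL}$-valued cochain. On a single standard simplex both maps are quasi-isomorphisms: $C^*(\Delta^n)$, $A_{PL,n}$ and $D(\Delta^n)$ all have the cohomology of a point, the key input being the polynomial Poincaré lemma, namely that $A_{PL,n}$ is acyclic in positive degrees, which I would establish by an explicit contracting homotopy (fiber integration along the last barycentric coordinate) staying within polynomial forms. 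Applying the extendability lemma to each map then yields the two natural quasi-isomorphisms $S^*(X)\buildrel{\simeq}\over\rightarrow D(X)\buildrel{\simeq}\over\leftarrow A_{PL}(X)$, and chasing definitions shows both are morphisms of dgas, natural in $X$.

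The main obstacle is the combination of extendability and the polynomial Poincaré lemma, which together form the technical core. Extendability is delicate because it is what makes the skeletal short exact sequences genuinely exact: one must produce an \emph{actual} extension of a polynomial form prescribed on a union of faces, not merely an extension up to cohomology. The Poincaré lemma must likewise be proved with an integration operator that visibly preserves polynomiality over $\kk$, rather than by borrowing smooth de Rham theory. Once these two inputs are in hand, the skeletal induction, the construction of $D$, and the naturality bookkeeping are all formal.
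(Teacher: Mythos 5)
The paper does not actually prove this statement: it quotes it from F\'elix--Halperin--Thomas \cite[Corollary 10.10]{Felix-Halperin-Thomas:ratht}, and your plan is in essence a reconstruction of that reference's own proof --- the simplicial cdga $A_{PL}$ of polynomial forms, the extendability property, the polynomial Poincar\'e lemma, and the intermediate dga $D(X)=C^*(\mathrm{Sing}\,X;A_{PL})$ are exactly the ingredients of Chapter 10 of that book. So the architecture is the right one, and the two inputs you single out as the technical core (extendability of $A_{PL}$ and the polynomial Poincar\'e lemma) are indeed where the real work lies.

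However, one step of your extendability lemma fails as written. The functor $A(-)=\Hom_{\mathrm{sSet}}(-,A)$ is contravariant, so it turns the increasing union $K=\bigcup_n \mathrm{sk}_n K$ into an \emph{inverse} limit, $A(K)=\lim_{\longleftarrow} A(\mathrm{sk}_nK)$, not a direct limit; the fact that homology commutes with direct limits (which the paper legitimately uses for the increasing union of sub-cdgas in the proof of Proposition~\ref{modele de Sullivan de la multiplication}) is therefore not available here, and cohomology does not commute with inverse limits in general. The point is not vacuous: for $K=\mathrm{Sing}\,X$, which is the case the theorem needs, there are nondegenerate simplices in every dimension, so the skeletal tower never terminates. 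The standard repair uses extendability a second time: the restriction maps $A(\mathrm{sk}_nK)\to A(\mathrm{sk}_{n-1}K)$ are surjective, so the Milnor $\lim^1$-exact sequence applies to the tower of cochain complexes; moreover your own five-lemma computation shows that $\ker\bigl(A(\mathrm{sk}_nK)\to A(\mathrm{sk}_{n-1}K)\bigr)$ is a product of complexes whose cohomology is concentrated in degree $n$, so that $H^p(A(\mathrm{sk}_nK))\to H^p(A(\mathrm{sk}_{n-1}K))$ is an isomorphism as soon as $n\geq p+2$. Hence the towers of cohomology groups stabilize degreewise, the $\lim^1$ terms vanish, and $H^p(A(K))\cong H^p(A(\mathrm{sk}_nK))$ for $n$ large. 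With that correction, applied to $C^*$, $A_{PL}$ and $D$ (all three extendable --- for $C^*$ and $D$ one extends cochains by zero), your argument is complete and agrees with the proof in the cited source.
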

\begin{remark}~\label{sur les reels formes de De Rham}
This cdga $A_{PL}(X)$ is called the algebra of \emph{polynomial differential forms}.
If $\kk=\R$ and $X$ is a smooth manifold $M$, you can think that  $A_{PL}(M)$ is the De Rham algebra of differential
forms on $M$, $A_{DR}(M)$~\cite[Theorem
11.4]{Felix-Halperin-Thomas:ratht}.
\end{remark}
\begin{definition}\cite[Definition 2.34]{Felix-Oprea-Tanre:algmodgeom}
Two topological spaces $X$ and $Y$ have the same
\emph{rational homotopy type} if
there exists a finite sequence of continuous applications
$$X\buildrel{f_0}\over\rightarrow Y_1\buildrel{f_1}\over\leftarrow Y_2
\dots 
Y_{n-1}\buildrel{f_{n-1}}\over\leftarrow Y_n\buildrel{f_{n}}\over\rightarrow Y
$$
such that the induced maps in rational cohomology
\begin{multline*}
H^*(X;\Q)\buildrel{H^*(f_0)}\over\leftarrow H^*(Y_1;\Q)\buildrel{H^*(f_1)}\over\rightarrow H^*(Y_2;\Q)
\dots
H^*(Y_{n-1}1;\Q)\\\buildrel{H^*(f_{n-1})}\over\rightarrow H^*(Y_n;\Q)\buildrel{H^*(f_{n})}\over\leftarrow H^*(Y;\Q)
\end{multline*}
are all isomorphisms. 
\begin{theorem}\label{modele minimal unique et groupes d'homotopie}
Let $X$ be a path connected topological space.

1) (Unicity of minimal Sullivan models~\cite[Corollary p. 191]{Felix-Halperin-Thomas:ratht}) Two minimal Sullivan models of $A_{PL}(X)$ are isomorphic.

2) Suppose that $X$ is simply connected and $\forall n\in\mathbb{N}$, $H_n(X;\kk)$ is finite dimensional.
Let $(\Lambda V,d)$ be a minimal Sullivan model of $X$.
Then~\cite[Theorem 15.11]{Felix-Halperin-Thomas:ratht} for all $n\in\mathbb{N}$, $V^n$ is isomorphic to
$\text{Hom}_\kk(\pi_n(X)\otimes_\mathbb{Z} \kk,\kk)\cong\text{Hom}_\mathbb{Z}(\pi_n(X),\kk)$.
In particular~\cite[Remark 1 p.208]{Felix-Halperin-Thomas:ratht}, $\text{Dimension } V^n=  \text{Dimension } \pi_n(X)\otimes_\mathbb{Z} \kk < \infty$.
\end{theorem}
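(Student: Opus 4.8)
The plan is to treat the two parts separately, since (1) is a purely algebraic statement about minimal Sullivan algebras whereas (2) requires comparing the algebraic model with the space $X$ itself.

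For (1), I would first invoke the Lifting Lemma for Sullivan algebras. Given two minimal Sullivan models $\varphi:(\Lambda V,d)\xrightarrow{\simeq}A_{PL}(X)$ and $\psi:(\Lambda W,d')\xrightarrow{\simeq}A_{PL}(X)$, the cofibrancy of $(\kk,0)\hookrightarrow(\Lambda V,d)$ together with the fact that $\psi$ is a quasi-isomorphism yields a morphism of cdgas $f:(\Lambda V,d)\to(\Lambda W,d')$ with $\psi\circ f$ homotopic to $\varphi$. By two-out-of-three, $f$ is itself a quasi-isomorphism. The crucial point is then the lemma that a quasi-isomorphism between minimal Sullivan algebras is an isomorphism. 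Here I would apply Proposition 14.13 of~\cite{Felix-Halperin-Thomas:ratht}: since $f$ is a quasi-isomorphism between Sullivan models, the induced map on indecomposables $Q(f)$ is a quasi-isomorphism. But minimality forces $d$ and $d'$ to land in $\Lambda^{\geq 2}$, so the differentials on $Q(\Lambda V,d)\cong V$ and $Q(\Lambda W,d')\cong W$ vanish; hence $Q(f)$ being a quasi-isomorphism means precisely that $f$ restricts to a linear isomorphism $V\xrightarrow{\cong}W$. A standard induction along the wordlength filtration then propagates this to an isomorphism of cdgas $f:(\Lambda V,d)\xrightarrow{\cong}(\Lambda W,d')$, proving uniqueness.

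For (2), I would first record the algebraic side: in a minimal Sullivan algebra the indecomposables carry the zero differential, so $H(Q(\Lambda V,d))=V$, and one calls the graded dual of $V$ the homotopy of the model. The heart of the matter is to identify this with $\pi_*(X)\otimes_\Z\kk$. I would invoke Sullivan's spatial realization: for simply connected $X$ with each $H_n(X;\kk)$ finite dimensional, the minimal model $(\Lambda V,d)$ of $A_{PL}(X)$ realizes to a rational space weakly equivalent to the rationalization of $X$, and a Hurewicz-type adjunction argument produces a natural pairing $V^n\otimes(\pi_n(X)\otimes\kk)\to\kk$ that is non-degenerate in each degree. Dualizing gives the asserted isomorphism $V^n\cong\Hom_\kk(\pi_n(X)\otimes_\Z\kk,\kk)$, and the finiteness of $H_n(X;\kk)$ guarantees that each $V^n$ is finite dimensional, whence the dimension count.

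The hard part is (2): establishing the duality between $V$ and $\pi_*(X)\otimes\Q$ is exactly Theorem 15.11 of~\cite{Felix-Halperin-Thomas:ratht} and genuinely requires the full apparatus of the realization functor and the comparison of minimal-model and Postnikov towers. In this expository account I would not reprove it from scratch but would cite it, giving the self-contained algebraic argument for (1) and reducing (2) to the realization theorem.
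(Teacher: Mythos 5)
The paper offers no proof of this theorem at all: both parts are stated as bare citations to \cite{Felix-Halperin-Thomas:ratht} (the Corollary on p.~191 for uniqueness, Theorem~15.11 for the identification of $V$ with the dual of the rational homotopy groups). So for part~(2) your treatment --- reducing to the realization theorem and citing Theorem~15.11 --- coincides exactly with what the paper does, and for part~(1) you go beyond the paper by actually sketching the uniqueness argument.

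Your sketch of part~(1) is the standard one (lifting lemma, then ``a quasi-isomorphism between minimal Sullivan algebras is an isomorphism''), but one step needs care in the generality claimed, namely path connected $X$. You deduce that $Q(f)$ is a quasi-isomorphism from Proposition~14.13 of \cite{Felix-Halperin-Thomas:ratht}; this is the delicate direction of that criterion (quasi-isomorphism implies quasi-isomorphism on indecomposables), and in that book it is established under simple-connectivity-type hypotheses on the Sullivan algebras --- note that the present paper, too, only ever invokes Proposition~14.13 for simply connected models. When $V^1\neq 0$ the indecomposables criterion is not available in this form, so for merely path connected $X$ you should instead quote Theorem~14.12 of \cite{Felix-Halperin-Thomas:ratht} (``a quasi-isomorphism between minimal Sullivan algebras is an isomorphism''), which holds in full generality and is precisely the result behind the Corollary on p.~191 that the statement cites; its proof replaces the $Q$-criterion by the observation that homotopic morphisms between minimal Sullivan algebras have equal linear parts, after which your wordlength induction (an algebra morphism inducing an isomorphism on indecomposables is an isomorphism, since the wordlength filtration is finite in each degree) applies verbatim. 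A second, smaller point: your ``two-out-of-three'' step tacitly uses that homotopic morphisms of cdgas induce the same map in cohomology, since $\psi\circ f$ is only homotopic, not equal, to $\varphi$; this is true and standard, but it is the fact doing the work there and should be named.
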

\begin{remark}
The isomorphim of graded vector spaces between $V$ and $\text{Hom}_\kk(\pi_*(X)\otimes_\mathbb{Z} \kk,\kk)$ is natural in some sense~\cite[p. 75-6]{Felix-Oprea-Tanre:algmodgeom}
with respect to maps $f:X\rightarrow Y$.
The isomorphism behaves well also with respect to the long exact sequence associated to a (Serre) fibration (\cite[Proposition 15.13]{Felix-Halperin-Thomas:ratht}
or~\cite[Proposition 2.65]{Felix-Oprea-Tanre:algmodgeom}).
\end{remark}
\end{definition}
\begin{theorem}\cite[Proposition 2.35]{Felix-Oprea-Tanre:algmodgeom}\cite[p. 139]{Felix-Halperin-Thomas:ratht}
Let $X$ and $Y$ be two simply connected topological spaces
such that $H^n(X;\Q)$ and $H^n(Y;\Q)$ are finite dimensional for all $n\in \N$.
Let $(\Lambda V,d)$ be a minimal Sullivan model of $X$
and let $(\Lambda W,d)$ be a minimal Sullivan model of $Y$.
Then $X$ and $Y$ have the same rational homotopy type if and only if
$(\Lambda V,d)$ is isomorphic to $(\Lambda W,d)$ as cdgas.
\end{theorem}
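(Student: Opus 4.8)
The plan is to route everything through the functor $A_{PL}$ of Theorem~\ref{quasi-isos entre A_PL et les cochaines}, so that the geometric statement becomes an algebraic one. Concretely, I would reduce the theorem to the following chain of equivalences: $X$ and $Y$ have the same rational homotopy type $\iff$ the cdgas $A_{PL}(X)$ and $A_{PL}(Y)$ are linked by a finite zigzag of quasi-isomorphisms of cdgas $\iff$ their minimal Sullivan models are isomorphic. The second equivalence is elementary given uniqueness of minimal models; the first splits into a soft forward implication and a harder backward one, and it is the backward one that carries the whole difficulty.

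I would treat the algebraic equivalence first. An isomorphism $\theta\colon(\Lambda V,d)\buildrel{\cong}\over\rightarrow(\Lambda W,d)$ immediately produces the zigzag
$$
A_{PL}(X)\buildrel{m_V}\over\leftarrow(\Lambda V,d)\buildrel{\theta}\over\rightarrow(\Lambda W,d)\buildrel{m_W}\over\rightarrow A_{PL}(Y),
$$
where $m_V$ and $m_W$ are the structural quasi-isomorphisms of the two minimal models. For the converse, I start from $m_V\colon(\Lambda V,d)\buildrel{\simeq}\over\rightarrow A_{PL}(X)$ and \emph{propagate} this minimal model along each arrow of the given algebraic zigzag: for a quasi-isomorphism $A\buildrel{\simeq}\over\rightarrow B$, a Sullivan model of $A$ transports to a Sullivan model of $B$, either by composition (if the arrow points the right way) or by the standard homotopy-lifting property of Sullivan algebras against quasi-isomorphisms (if it points the wrong way). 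After traversing the entire zigzag, $(\Lambda V,d)$ becomes a minimal Sullivan model of $A_{PL}(Y)$; by the unicity statement (part~1 of Theorem~\ref{modele minimal unique et groupes d'homotopie}) it is therefore isomorphic to $(\Lambda W,d)$.

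The forward half of the bridge to topology is then formal. Given a topological zigzag $X\buildrel{f_0}\over\rightarrow Y_1\buildrel{f_1}\over\leftarrow\cdots\buildrel{f_n}\over\rightarrow Y$ inducing isomorphisms on $H^*(-;\Q)$, I apply the contravariant functor $A_{PL}$. Since $H^*(A_{PL}(Z))\cong H^*(Z;\kk)$ naturally (combine the natural quasi-isomorphisms of Theorem~\ref{quasi-isos entre A_PL et les cochaines} with $H^*(S^*(Z))=H^*(Z;\kk)$), each $A_{PL}(f_i)$ induces an isomorphism in cohomology, hence is a quasi-isomorphism of cdgas; this is exactly the algebraic zigzag needed to feed into the previous step.

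The hard part will be the backward half of the bridge: manufacturing an honest \emph{topological} zigzag out of a purely \emph{algebraic} one. This is the main obstacle, because it requires machinery lying outside the elementary algebra developed above, namely the spatial realization functor $\langle-\rangle$ adjoint to $A_{PL}$. The inputs I would invoke are that, on simply connected cdgas of finite type, realization sends cdga quasi-isomorphisms to rational homotopy equivalences, and that there is a natural rationalization map $\eta_Z\colon Z\to\langle A_{PL}(Z)\rangle$ inducing an isomorphism on $H^*(-;\Q)$. The hypotheses of simple connectivity and finite-dimensionality of every $H^n(-;\Q)$ are precisely what make these realizations well behaved (via part~2 of Theorem~\ref{modele minimal unique et groupes d'homotopie}, which guarantees that $V$ and $W$ are of finite type). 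Realizing the algebraic zigzag then yields a zigzag of continuous maps between the realizations inducing rational cohomology isomorphisms, and splicing in $\eta_X$ and $\eta_Y$ at the two ends produces the required topological zigzag connecting $X$ and $Y$, which completes the argument.
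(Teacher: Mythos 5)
The chapter itself gives no proof of this theorem---it is quoted from \cite[Proposition 2.35]{Felix-Oprea-Tanre:algmodgeom} and \cite[p. 139]{Felix-Halperin-Thomas:ratht}---so you are supplying an argument where the paper only cites one; your strategy is in fact the standard one from those references. Your algebraic core is correct: transporting the minimal model $(\Lambda V,d)$ along a zigzag of cdga quasi-isomorphisms (by composition when the arrow points forward, and by the lifting lemma \cite[Proposition 14.6]{Felix-Halperin-Thomas:ratht} together with the fact that homotopic morphisms agree in cohomology when it points backward), then invoking unicity of minimal models (part 1) of Theorem~\ref{modele minimal unique et groupes d'homotopie}), is exactly right. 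So is the soft direction of the bridge to topology: applying $A_{PL}$ to a topological zigzag yields an algebraic one, via the natural isomorphisms $H^*(A_{PL}(Z))\cong H^*(Z;\Q)$ coming from Theorem~\ref{quasi-isos entre A_PL et les cochaines}.

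The gap is in your final step. The paper's definition of ``same rational homotopy type'' requires that \emph{every} map in the topological zigzag induce an isomorphism on $H^*(-;\Q)$, and your spliced zigzag contains the two separate maps $\eta_X\colon X\rightarrow\langle A_{PL}(X)\rangle$ and $\langle m_V\rangle\colon\langle A_{PL}(X)\rangle\rightarrow\langle\Lambda V\rangle$. Neither is covered by the realization theorems you invoke: those theorems (Chapter 17 of \cite{Felix-Halperin-Thomas:ratht}) apply to simply connected Sullivan algebras of finite type, whereas $A_{PL}(X)$ is neither a Sullivan algebra nor of finite type, so its realization $\langle A_{PL}(X)\rangle$ is uncontrolled; in particular there is no theorem asserting that $\eta_X$ alone is a rational cohomology isomorphism. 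What the theory actually provides is that the \emph{composite} $X\rightarrow\langle A_{PL}(X)\rangle\rightarrow\langle\Lambda V\rangle$, i.e.\ the adjoint of the minimal model $m_V$, induces an isomorphism on $H^*(-;\Q)$ (here the finite-type hypothesis on $V$, which you correctly extract from part 2) of Theorem~\ref{modele minimal unique et groupes d'homotopie}, is needed). The repair is simply not to factor: given an isomorphism $\theta\colon(\Lambda V,d)\rightarrow(\Lambda W,d)$, use the zigzag
$$
X\longrightarrow\langle\Lambda V\rangle\buildrel{\langle\theta\rangle}\over\longleftarrow\langle\Lambda W\rangle\longleftarrow Y,
$$
whose outer maps are the adjoints of $m_V$ and $m_W$ and whose middle map is the realization of $\theta$, an isomorphism (contravariance reverses its direction, which is harmless in a zigzag). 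With that replacement, every arrow induces an isomorphism on rational cohomology and your argument is complete.
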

\section{Sullivan model of a pullback}
\subsection{Sullivan model of a product}
Let $X$ and $Y$ be two topological spaces.
Let $p_1:X\times Y\twoheadrightarrow Y$ and $p_2:X\times Y\twoheadrightarrow X$
be the projection maps.
Let $m$ be the unique morphism of cdgas given by the universal
property of the tensor product (Example~\ref{example cdga} 1))
$$
\xymatrix{
& A_{PL}(Y)\ar[d]\ar@/^/[ddr]^{A_{PL}(p_2)}\\
A_{PL}(X)\ar[r]\ar@/_/[drr]_{A_{PL}(p_1)}
&A_{PL}(X)\otimes
A_{PL}(Y)\ar@{.>}[dr]|-{\exists!m}\\
&&A_{PL}(X\times Y).
}
$$
Assume that $H^*(X;\kk)$ or $H^*(Y;\kk)$ is finite dimensional in all degrees.
Then~\cite[Example 2, p. 142-3]{Felix-Halperin-Thomas:ratht} $m$ is a quasi-isomorphism.
Let $m_X:\Lambda V\buildrel{\simeq}\over\rightarrow A_{PL}(X)$ be a Sullivan model of $X$.
Let $m_Y:\Lambda W\buildrel{\simeq}\over\rightarrow A_{PL}(Y)$ be a Sullivan model of $Y$.
Then by K\"unneth theorem, the composite
$$
\Lambda V\otimes \Lambda W\buildrel{m_X\otimes m_Y}\over\rightarrow
A_{PL}(X)\otimes A_{PL}(Y) \buildrel{m}\over\rightarrow
A_{PL}(X\times Y)
$$
is a quasi-isomorphism of cdgas.
Therefore we have proved that ``the Sullivan model of a product is the tensor product
of the Sullivan models''.

\subsection{the model of the diagonal}\label{modele de la diagonale}
Let $X$ be a topological space such that $H^*(X)$ is finite dimensional in all degrees.
Denote by $\Delta:X\rightarrow X\times X$, $x\mapsto (x,x)$ the diagonal map of $X$. Using the previous paragraph, since $A_{PL}(p_1\circ \Delta)=A_{PL}(p_2\circ \Delta)=A_{PL}(\id)=\id$,
we have the commutative diagram of
cdgas.
$$
\xymatrix{
A_{PL}(X)\ar[r]\ar[dr]_{A_{PL}(p_1)}\ar@/_2pc/[ddr]_{\id}
&A_{PL}(X)\otimes A_{PL}(X)\ar[d]^{m}_\simeq
& A_{PL}(X)\ar[l]\ar[dl]^{A_{PL}(p_2)}\ar@/^2pc/[ddl]^{\id}\\
&A_{PL}(X\times X)\ar[d]^{A_{PL}(\Delta)}\\
&A_{PL}(X)
}
$$
Therefore the composite $A_{PL}(X)\otimes A_{PL}(X)\buildrel{m}\over\rightarrow
A_{PL}(X\times X)\buildrel{A_{PL}(\Delta)}\over\rightarrow A_{PL}(X)$
coincides with the multiplication $\mu: A_{PL}(X)\otimes A_{PL}(X)\rightarrow A_{PL}(X)$.
Therefore the following diagram of cdgas commutes
$$
\xymatrix{
A_{PL}(X)
&A_{PL}(X\times X)\ar[l]_{A_{PL}(\Delta)}\\
& A_{PL}(X)\otimes A_{PL}(X)\ar[ul]_{\mu}\ar[u]_{m}^\simeq\\
\Lambda V\ar[uu]^{m_X}_\simeq
&\Lambda V\otimes \Lambda V\ar[l]^{\mu}\ar[u]_{m_X\otimes m_X}^\simeq\\
}
$$
Here $m_X:\Lambda V\buildrel{\simeq}\over\rightarrow A_{PL}(X)$ denotes a Sullivan model of $X$. Therefore we have proved that ``the morphism modelling the diagonal map
is the multiplication of the Sullivan model''.

\subsection{Sullivan model of a fibre product}\label{Sullivan model d'un produit fibre}

Consider a pullback square in the category of topological spaces
$$
\xymatrix{
P\ar[r]^g\ar[d]_q
&E\ar[d]^p\\
X\ar[r]^f
&B
}
$$

where

$\bullet$ $p:E\rightarrow B$ is a (Serre) fibration between two topological spaces,

$\bullet$ for every $i\in\mathbb{N}$, $H^i(X)$ and $H^i(B)$ are finite dimensional,

$\bullet$  the topological spaces $X$ and $E$ are path-connected and
$B$ is simply-connected.

\noindent Since $p$ is a (Serre) fibration, the pullback map $q$ is also a (Serre) fibration.
Let $A_{PL}(B)\otimes\Lambda V$ be a relative Sullivan model of $A(p)$.
Consider the corresponding commutative diagram of cdgas
$$
\xymatrix{
&A_{PL}(B)\ar[r]^{A_{PL}(f)}\ar[d]\ar@/_2pc/[ddl]_{A_{PL}(p)}
&A_{PL}(X)\ar[d]\ar@/^2pc/[ddr]^{A_{PL}(q)}\\
&A_{PL}(B)\otimes\Lambda V\ar[r]\ar[dl]_m^\simeq
&A_{PL}(X)\otimes_{A_{PL}(B)}A_{PL}(B)\otimes\Lambda V\ar@{.>}[dr]|-{\exists!m'}\\
A_{PL}(E)\ar[rrr]^{A_{PL}(g)}
&&&A_{PL}(P)
}
$$
where the rectangle is a pushout and $m'$ is given by the universal property.
Explicitly, for $x\in A_{PL}(X)$ and $e\in A_{PL}(B)\otimes\Lambda V$,
$m'(x\otimes e)$ is the product of $A_{PL}(q)(x)$
and $A_{PL}(g)\circ m(e)$.

Since $A_{PL}(B)\hookrightarrow A_{PL}(B)\otimes\Lambda V$ is a relative Sullivan model,
the inclusion obtained via pullback
$A_{PL}(X)\hookrightarrow A_{PL}(X)\otimes_{A_{PL}(B)}(A_{PL}(B)\otimes\Lambda V,d)\cong
(A_{PL}(X)\otimes\Lambda V,d)$ is also a relative Sullivan model (minimal if
$A_{PL}(B)\hookrightarrow A_{PL}(B)\otimes\Lambda V$ is minimal).

By~\cite[Proposition 15.8]{Felix-Halperin-Thomas:ratht} (or for weaker hypothesis~\cite[Theorem 2.70]{Felix-Oprea-Tanre:algmodgeom}),
\begin{theorem}
The morphism of cdgas $m'$ is a quasi-isomorphism.
\end{theorem}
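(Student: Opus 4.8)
The plan is to prove that $m'$ is a quasi-isomorphism by comparing the Serre spectral sequence of the fibration $q\colon P\to X$ with an analogous spectral sequence built from the relative Sullivan model $A_{PL}(X)\otimes\Lambda V$, and showing that $m'$ induces an isomorphism already on the $E_2$-page.

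First I would record the input coming from the fibration $p$. By hypothesis $m\colon(A_{PL}(B)\otimes\Lambda V,d)\xrightarrow{\simeq}A_{PL}(E)$ is a quasi-isomorphism, so $A_{PL}(B)\otimes\Lambda V$ is a relative Sullivan model of $p$. Quotienting the base by its augmentation ideal yields the \emph{fibre model} $(\Lambda V,\bar d):=\kk\otimes_{A_{PL}(B)}(A_{PL}(B)\otimes\Lambda V)$, and the general theory of relative Sullivan models of fibrations identifies $(\Lambda V,\bar d)$ with a Sullivan model of the fibre $F$ of $p$; in particular $H^*(\Lambda V,\bar d)\cong H^*(F)$. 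Because the square is a pullback, the fibre of $q$ is again $F$, and the base-changed inclusion $A_{PL}(X)\hookrightarrow A_{PL}(X)\otimes\Lambda V$ (already noted to be a relative Sullivan model) has the same fibre model $(\Lambda V,\bar d)$.

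Next I would set up the two filtrations. On $A_{PL}(X)\otimes\Lambda V$ I use the base-degree filtration $F^p=A_{PL}^{\geq p}(X)\otimes\Lambda V$; taking associated graded kills the base differential, so the $E_0$-differential is the fibre differential $\bar d$, giving $E_1=A_{PL}(X)\otimes H^*(\Lambda V,\bar d)$ and then, after K\"unneth over the field $\kk$, $E_2=H^*(X)\otimes H^*(\Lambda V,\bar d)\cong H^*(X)\otimes H^*(F)$. On $A_{PL}(P)$ I use the filtration coming from the fibration $q$, whose spectral sequence is the Serre spectral sequence, with $E_2^{p,*}=H^p(X;\cH^*(F))$. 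The crucial geometric point is that $q$ is pulled back from the \emph{simply connected} base $B$, so the monodromy action of $\pi_1(X)$ on $H^*(F)$ factors through $\pi_1(B)=0$ and is trivial; hence $E_2^{p,q}=H^p(X)\otimes H^q(F)$ with constant coefficients, matching the model side. The finiteness hypotheses on $H^*(X)$ and $H^*(B)$ (hence on $H^*(F)$) guarantee that both spectral sequences are well-behaved and convergent.

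Finally I would check that $m'$ is filtration-preserving: it restricts on the base to $A_{PL}(q)$ and on $\Lambda V$ to the fibre model $(\Lambda V,\bar d)\to A_{PL}(F)$, so it carries $F^p$ into the $p$-th Serre filtration. It therefore induces a morphism of spectral sequences which on $E_2$ is $\id_{H^*(X)}\otimes H^*(\text{fibre model})$; since the fibre model is a quasi-isomorphism, this is an isomorphism of $E_2$-pages. By the comparison theorem for convergent spectral sequences, $m'$ induces an isomorphism on cohomology, i.e.\ $m'$ is a quasi-isomorphism. The \emph{main obstacle} is precisely the identification of $(\Lambda V,\bar d)$ with a model of the fibre together with the triviality of the coefficient system (both relying on $B$ being simply connected), and the careful verification that $m'$ respects the two filtrations so that the $E_2$-comparison applies.
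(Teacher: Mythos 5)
Your strategy is genuinely different from the paper's: the paper proves the theorem by observing that $A_{PL}(B)\otimes\Lambda V$ is a semifree resolution of $A_{PL}(E)$ over $A_{PL}(B)$, so that $H\bigl(A_{PL}(X)\otimes_{A_{PL}(B)}(A_{PL}(B)\otimes\Lambda V)\bigr)$ computes the differential torsion product $\operatorname{Tor}^{A_{PL}(B)}(A_{PL}(X),A_{PL}(E))$, and then transporting to singular cochains and invoking the Eilenberg--Moore formula $\operatorname{Tor}^{S^*(B)}(S^*(X),S^*(E))\cong H^*(P)$. Your route, however, contains a circularity that is fatal as written: the input you cite as ``general theory'' --- that the quotient $(\Lambda V,\bar{d})=\kk\otimes_{A_{PL}(B)}(A_{PL}(B)\otimes\Lambda V)$ is a Sullivan model of the fibre $F$, so that $H^*(\Lambda V,\bar{d})\cong H^*(F)$ --- is, in this paper, a \emph{consequence} of the very theorem you are proving: the following section (``Sullivan model of a fibration'') obtains it precisely by applying this theorem with $X=\{b_0\}$. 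Nor is it an elementary fact that can simply be imported: it is the Grivel--Halperin--Thomas theorem, and its known proofs (Eilenberg--Moore, or the \emph{hard}, Zeeman-type direction of Serre spectral sequence comparison, where one knows the map is an isomorphism on base and abutment and deduces it on the fibre) carry essentially all the difficulty of the present statement. What your argument actually establishes is the implication ``fibre case $\Rightarrow$ pullback case'', i.e.\ the easy direction of the comparison; the base case is missing, so the proof does not close.

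There are also secondary, fixable gaps. First, the filtration $F^p=A_{PL}^{\geq p}(X)\otimes\Lambda V$ does not have associated-graded differential $1\otimes\bar{d}$: the fibre differential $\bar{d}$ is defined through the augmentation $A_{PL}(B)\rightarrow\kk$, not by discarding base elements of positive degree, and since $A_{PL}^0(X)\neq\kk$ the filtration-degree-zero part of $d(1\otimes v)$ is not $\bar{d}v$. The standard repair is to first replace the bases by minimal Sullivan models $\Lambda W$ with $W=W^{\geq 2}$ (as the paper itself does right after the theorem), where $(\Lambda W)^0=\kk$ and $(\Lambda W)^1=0$ make the filtration argument behave. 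Second, you would need to realize the Serre spectral sequence of $q$ by an explicit filtration on $A_{PL}(P)$ itself (or pass through $D(P)$ and $S^*(P)$ via the natural quasi-isomorphisms relating $A_{PL}$ to singular cochains) and then verify that $m'(x\otimes e)=A_{PL}(q)(x)\cdot A_{PL}(g)(m(e))$ respects the two filtrations; this is a genuine verification, not automatic. Your remaining points are sound: the monodromy of $q$ on $H^*(F)$ is trivial because it factors through $\pi_1(B)=0$, and the filtration is bounded in each total degree (since $V=V^{\geq 1}$), so convergence is not an issue.
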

We can summarize this theorem by saying that:
``The push-out of a (minimal) relative Sullivan model of a fibration is a (minimal) relative Sullivan
model of the pullback of the fibration.''
\begin{proof}[Idea of the proof]
Since by~\cite[Lemma 14.1]{Felix-Halperin-Thomas:ratht}, $A_{PL}(B)\otimes\Lambda V$
is a ``semi-free'' resolution of $A_{PL}(E)$ as left $A_{PL}(B)$-modules, by definition of the differential
torsion product,
$$
\text{Tor}^{A_{PL}(B)}(A_{PL}(X),A_{PL}(E)):=H(A_{PL}(X)\otimes_{A_{PL}(B)}(A_{PL}(B)\otimes\Lambda V).
$$
By Theorem~\ref{quasi-isos entre A_PL et les cochaines} and naturality,
we have an isomorphim of graded vector spaces
$$
\text{Tor}^{A_{PL}(B)}(A_{PL}(X),A_{PL}(E))\cong \text{Tor}^{S^*(B)}(S^*(X),S^*(E)).
$$
The Eilenberg-Moore formula gives an isomorphism of graded vector spaces
$$\text{Tor}^{S^*(B)}(S^*(X),S^*(E))\cong H^*(P).$$
We claimed that the resulting isomorphism between the homology of $A_{PL}(X)\otimes_{A_{PL}(B)}(A_{PL}(B)\otimes\Lambda V)$ and $H^*(P)$ can be identified with $H(m)$. Therefore $m$ is a quasi-isomorphism.
\end{proof}
Instead of working with $A_{PL}$, we prefer usually to work at the level of Sullivan models.
Let $m_B:\Lambda B\buildrel{\simeq}\over\rightarrow A_{PL}(B)$ be a Sullivan model of $B$.
Let $m_X:\Lambda X\buildrel{\simeq}\over\rightarrow A_{PL}(X)$ be a Sullivan model of $X$.
Let $\varphi$ be a morphism of cdgas such the following diagram commutes
exactly

$$
\xymatrix{
A_{PL}(B)\ar[r]^{A_{PL}(f)}
&A_{PL}(X)\\
\Lambda B\ar[r]^{\varphi}\ar[u]^{m_B}_\simeq
&\Lambda X\ar[u]^{m_X}_\simeq
}
$$
Let $\Lambda B\hookrightarrow \Lambda B\otimes \Lambda V$ be a relative
Sullivan model of $A_{PL}(p)\circ m_B$.
Consider the corresponding commutative diagram of cdgas
\begin{equation}\label{diagram Sullivan model d'un produit fibre}
\xymatrix{
A_{PL}(B)\ar[dd]_{A_{PL}(p)}
&\Lambda B\ar[r]^{\varphi}\ar[d]\ar[l]_{m_B}^ \simeq
&\Lambda X\ar[d]\ar[r]^{m_X}_\simeq
&A_{PL}(X)\ar[dd]^{A_{PL}(q)}\\
&\Lambda B\otimes\Lambda V\ar[r]\ar[dl]_m^\simeq
&\Lambda X\otimes_{\Lambda B}(\Lambda B\otimes\Lambda V)\ar@{.>}[dr]|-{\exists!m'}\\
A_{PL}(E)\ar[rrr]^{A_{PL}(g)}
&&&A_{PL}(P)
}
\end{equation}
where the rectangle is a pushout and $m'$ is given by the universal property.
Then again, $\Lambda X\hookrightarrow \Lambda X\otimes_{\Lambda B}(\Lambda B\otimes\Lambda V)$
is a relative Sullivan model and the morphism of cdgas $m'$ is a quasi-isomorphism.
\vskip 1cm
The reader should skip the following remark on his first reading.
\begin{remark}\label{modele a homotopie pres}
1) In the previous proof, if the composites $m_X\circ \varphi$ and $A_{PL}(f)\circ m_B$ are not strictly equal then the map $m'$ is not well defined.
In general, the composites $m_X\circ \varphi$ and $A_{PL}(f)\circ m_B$ are only homotopic and the situation is more complicated: see part 2) of this remark.

2) Let $m_B:\Lambda B\buildrel{\simeq}\over\rightarrow A_{PL}(B)$ be a Sullivan model of $B$.
Let $m_X':\Lambda X'\buildrel{\simeq}\over\rightarrow A_{PL}(X)$ be a Sullivan model of $X$.
By the lifting Lemma of Sullivan models~\cite[Proposition 14.6]{Felix-Halperin-Thomas:ratht},
there exists a morphism of cdgas $\varphi':\Lambda B\rightarrow\Lambda
X'$ such that the following diagram commutes
only up to homotopy (in the sense of~\cite[Section 2.2]{Felix-Oprea-Tanre:algmodgeom})
$$
\xymatrix{
A_{PL}(B)\ar[r]^{A_{PL}(f)}
&A_{PL}(X)\\
\Lambda B\ar[r]^{\varphi'}\ar[u]^{m_B}_\simeq
&\Lambda X'.\ar[u]^{m_X'}_\simeq
}
$$
In general, this square is not strictly commutative.
Let $\Lambda B\hookrightarrow \Lambda B\otimes \Lambda V$ be a relative
Sullivan model of $A_{PL}(p)\circ m_B$.
Then there exists a commutative diagram of cdgas
$$
\xymatrix{
A_{PL}(X)\ar[r]^{A_{PL}(q)}
& A_{PL}(P)\\
\Lambda X\ar[r]\ar[u]^ \simeq\ar[d]^ \simeq
& \Lambda X\otimes_{\Lambda B} (\Lambda B\otimes \Lambda V)\ar[u]^ \simeq\ar[d]^ \simeq\\
\Lambda X'\ar[r]
& \Lambda X'\otimes_{\Lambda B} (\Lambda B\otimes \Lambda V)}$$
\end{remark}
\begin{proof}[Proof of part 2) of Remark~\ref{modele a homotopie pres}]
Let $\Lambda B\buildrel{\varphi}\over\hookrightarrow \Lambda X\buildrel{\theta}\over\rightarrow \Lambda X'$ be a relative Sullivan model of $\varphi'$.
Since the composites $m_{X}'\circ\theta\circ\varphi$ and $A_{PL}(f)\circ m_B$ are homotopic,
by the homotopy extension property~\cite[Proposition 2.22]{Felix-Oprea-Tanre:algmodgeom} of the relative Sullivan model $\varphi:\Lambda B\hookrightarrow \Lambda X$,
there exists a morphism of cdgas $m_X:\Lambda X\rightarrow A_{PL}(X)$ homotopic to $m_{X}'\circ\theta$ such that $m_X\circ \varphi=A_{PL}(f)\circ m_B$.
Therefore using diagram~(\ref{diagram Sullivan model d'un produit fibre}), we obtain the following commutative diagram of cdgas:
$$
\xymatrix{
A_{PL}(X)\ar[r]^{A_{PL}(q)}
& A_{PL}(P)
&A_{PL}(E)\ar[l]_{A_{PL}(g)}\\
\Lambda X\ar[r]\ar[u]^\simeq_{m_X}\ar[d]_\simeq^{\theta}
& \Lambda X\otimes_{\Lambda B} (\Lambda B\otimes \Lambda V)\ar[u]^ \simeq_{m'}\ar[d]_\simeq^{\theta\otimes_{\Lambda B} (\Lambda B\otimes \Lambda V)}
&\Lambda B\otimes \Lambda V\ar[u]^\simeq_{m}\ar[l]\\
\Lambda X'\ar[r]
& \Lambda X'\otimes_{\Lambda B} (\Lambda B\otimes \Lambda V).
}$$
Here, since $\theta$ is a quasi-isomorphism, the pushout morphism $\theta\otimes_{\Lambda B} (\Lambda B\otimes \Lambda V)$ along the relative Sullivan model
$\Lambda X\hookrightarrow \Lambda X\otimes_{\Lambda B}(\Lambda B\otimes\Lambda V)$ is also a quasi-isomorphism ~\cite[Lemma 14.2]{Felix-Halperin-Thomas:ratht}.
\end{proof}
\subsection{Sullivan model of  a fibration}\label{modele de Sullivan d'une fibration}
Let $p:E\rightarrow B$ be a (Serre) fibration with fibre $F:=p^{-1}(b_0)$.

$$
\xymatrix{
F\ar[r]^j\ar[d]
&E\ar[d]^p\\
b_0\ar[r]
&B
}
$$
Taking $X$ to be the point $b_0$, we can apply the results
of the previous section.
Let $m_B:(\Lambda V,d)\buildrel{\simeq}\over\rightarrow A_{PL}(B)$
be a Sullivan model of $B$.
Let $(\Lambda V,d)\hookrightarrow (\Lambda V\otimes\Lambda W,d)$
be a relative Sullivan model of $A_{PL}(p)\circ m_B$.

Since $A_{PL}(\{b_0\})$ is equal to $(\kk,0)$, there is
a unique morphism of cdgas $m'$ such that the following diagram commutes

$$
\xymatrix{
A_{PL}(B)\ar[r]^{A_{PL}(p)}
&A_{PL}(E)\ar[r]^{A_{PL}(j)}
&A_{PL}(F)\\
(\Lambda V,d)\ar[r]\ar[u]_{m_B}^\simeq
&(\Lambda V\otimes\Lambda W,d)\ar[r]\ar[u]_\simeq
&(k,0)\otimes_{(\Lambda V,d)}(\Lambda V\otimes\Lambda W,d)\ar[u]_{m'}
}$$
Suppose that the base $B$ is a simply connected space
and that the total space $E$ is path-connected.
Then by the previous section, the morphism of cdga's
$$
m':(k,0)\otimes_{(\Lambda V,d)}(\Lambda V\otimes\Lambda W,d)\cong
(\Lambda W,\bar{d})\buildrel{\simeq}\over\longrightarrow  A_{PL}(F)
$$
is a quasi-isomorphism:

``
The cofiber of a relative Sullivan model of a fibration
is a Sullivan model of the fiber of the fibration.''

Note that the cofiber of a relative Sullivan model is minimal if and only if the relative
Sullivan model is minimal.
%Exemple Fibration des lacets point\'es.
\subsection{Sullivan model of free loop spaces}
Let $X$ be a simply-connected space.
Consider the commutative diagram of spaces
$$
\xymatrix{
X^{S^1}\ar[r]\ar[d]_{ev}
& X^I\ar[d]_{(ev_0,ev_1)}
& X\ar[dl]^{\Delta}\ar[l]^{\approx}_\sigma
\\
X\ar[r]_-{\Delta}
&X\times X
}
$$
where the square is a pullback.
Here $I$ denotes the closed interval $[0,1]$, $ev$, $ev_0$, $ev_1$ are
the evaluation maps and the homotopy equivalence $\sigma:X\buildrel{\approx}\over\rightarrow X^I$ is the inclusion of constant paths.
Let $m_X:\Lambda V\buildrel{\simeq}\over\rightarrow A_{PL}(X)$
be a minimal Sullivan model of $X$.
By Proposition~\ref{modele de Sullivan de la multiplication}, the multiplication $\mu:\Lambda V\otimes\Lambda V\rightarrow  \Lambda V$
admits a minimal relative Sullivan model of the form
$$
\Lambda V\otimes\Lambda V\hookrightarrow\Lambda V\otimes\Lambda V\otimes\Lambda sV.
$$
Since $\mu$ is a model of the diagonal (Section~\ref{modele de la diagonale})
and since
$\Delta=(ev_0,ev_1)\circ\sigma$,
we have the commutative rectangle of cdgas
$$
\xymatrix{
A_{PL}(X\times X)\ar[rr]^{A_{PL}((ev_0,ev_1))}
&& A_{PL}(X^I)\ar[r]^{A_{PL}(\sigma)}
& A_{PL}(X)\\
\Lambda V\otimes\Lambda V\ar[u]^{m_{X\times X}}_\simeq\ar[rr]
&& \Lambda V\otimes\Lambda V\otimes\Lambda sV\ar[r]_-\simeq
&\Lambda V\ar[u]_{m_{X}}^\simeq
}
$$
Since $\sigma$ is a homotopy equivalence,
$S^*(\sigma)$ is a homotopy equivalence of complexes and
in particular a quasi-isomorphim.
So by Theorem~\ref{quasi-isos entre A_PL et les cochaines} and naturality,
$A_{PL}(\sigma)$ is also a quasi-isomorphism.
Therefore, by the lifting property of relative Sullivan
models~\cite[Proposition 14.6]{Felix-Halperin-Thomas:ratht},
there exists a morphism of cdgas
$\varphi:\Lambda V\otimes\Lambda V\otimes\Lambda sV
\rightarrow A_{PL}(X^I)
$ such that, in the diagram of cdgas
$$
\xymatrix{
A_{PL}(X\times X)\ar[r]^{A_{PL}((ev_0,ev_1))}
& A_{PL}(X^I)\ar[r]^{A_{PL}(\sigma)}_\simeq
& A_{PL}(X)\\
\Lambda V\otimes\Lambda V\ar[u]^{m_{X\times X}}_\simeq\ar[r]
& \Lambda V\otimes\Lambda V\otimes\Lambda sV\ar[r]_-\simeq
\ar@{.>}[u]^{\varphi}_\simeq
&\Lambda V\ar[u]_{m_{X}}^\simeq
}
$$
the left square commutes exactly and the right square
commutes in homology.
Therefore $\varphi$ is also a quasi-isomorphism.
This means that
$$
\Lambda V\otimes\Lambda V\hookrightarrow\Lambda V\otimes\Lambda V\otimes\Lambda sV.
$$
 is a relative Sullivan model of the composite
$$
\Lambda V\otimes \Lambda V\buildrel{m_{X\times X}}\over\rightarrow
A_{PL}(X\times X)\buildrel{A_{PL}((ev_0,ev_1))}\over\longrightarrow
A_{PL}(X^I).
$$
Here diagram~(\ref{diagram Sullivan model d'un produit fibre})
specializes to the following commutative diagram of cdgas
\begin{equation}\label{diagram Sullivan model lacets libres}
\xymatrix{
&\Lambda V\otimes\Lambda V\ar[r]^{\mu}\ar[d]
&\Lambda V\ar[d]\ar[r]^{m_X}_\simeq
&A_{PL}(X)\ar[dd]^{A_{PL}(ev)}\\
&\Lambda V\otimes\Lambda V\otimes\Lambda sV\ar[r]\ar[dl]_\varphi^\simeq
&\Lambda V\otimes_{\Lambda V\otimes\Lambda V}\Lambda V\otimes\Lambda V\otimes\Lambda sV\ar[dr]_{\simeq}\\
A(X^I)\ar[rrr]
&&&A(X^{S^1})
}
\end{equation}
where the rectangle is a pushout. Therefore
$$\Lambda V\hookrightarrow
\Lambda V\otimes_{\Lambda V\otimes\Lambda V}\left(\Lambda V\otimes\Lambda V\otimes\Lambda sV\right)\cong (\Lambda V\otimes\Lambda sV,\delta)
$$
is a minimal relative Sullivan model of
$A_{PL}(ev)\circ m_X$.
\begin{corollary}\label{theoreme de Chen sur lacets libres}
Let $X$ be a simply-connected space. Then the free loop
space cohomology of $H^*(X^{S^1};\kk)$ with coefficients in a field
$\kk$ of characteristic $0$ is isomorphic to the Hochschild homology
of $A_{PL}(X)$, $HH_*(A_{PL}(X),A_{PL}(X))$.
\index{Hochschild homology}
\end{corollary}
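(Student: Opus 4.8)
The plan is to realize both sides of the claimed isomorphism as one and the same differential torsion product and then transport the computation along the quasi-isomorphism $m_X$. Recall first that for a cdga $A$ over the characteristic-zero field $\kk$, graded commutativity gives $A^{op}=A$, so the enveloping algebra is simply $A\otimes A$ and, by definition, the Hochschild homology is the differential torsion product
$$HH_*(A,A)=\operatorname{Tor}^{A\otimes A}(A,A),$$
where the two copies of $A$ are regarded as $A\otimes A$-modules through the multiplication $\mu$. Thus it suffices to identify $H^*(X^{S^1};\kk)$ with $\operatorname{Tor}^{A_{PL}(X)\otimes A_{PL}(X)}(A_{PL}(X),A_{PL}(X))$.

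First I would compute this torsion product through the minimal Sullivan model $m_X\colon(\Lambda V,d)\buildrel{\simeq}\over\rightarrow A_{PL}(X)$. Proposition~\ref{modele de Sullivan de la multiplication} provides the relative Sullivan model
$$\Lambda V\otimes\Lambda V\hookrightarrow(\Lambda V\otimes\Lambda V\otimes\Lambda sV,D)\buildrel{\simeq}\over\twoheadrightarrow\Lambda V$$
of $\mu$. Being a relative Sullivan model, the left inclusion is semifree as a module over $\Lambda V\otimes\Lambda V$ (as in~\cite[Lemma 14.1]{Felix-Halperin-Thomas:ratht}), and the right arrow is a quasi-isomorphism; together they exhibit $(\Lambda V\otimes\Lambda V\otimes\Lambda sV,D)$ as a semifree resolution of $\Lambda V$ over $\Lambda V\otimes\Lambda V$. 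Hence, exactly as in the fibre-product argument, the torsion product is computed by the cofiber of this resolution:
$$\operatorname{Tor}^{\Lambda V\otimes\Lambda V}(\Lambda V,\Lambda V)=H\!\left(\Lambda V\otimes_{\Lambda V\otimes\Lambda V}(\Lambda V\otimes\Lambda V\otimes\Lambda sV)\right).$$

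Next I would invoke the pushout already computed in diagram~(\ref{diagram Sullivan model lacets libres}), which identifies this cofiber with $(\Lambda V\otimes\Lambda sV,\delta)$, the minimal relative Sullivan model of $A_{PL}(ev)\circ m_X$. As that cdga is a Sullivan model of the free loop space, its cohomology is $H^*(X^{S^1};\kk)$. Chaining these identifications gives
$$H^*(X^{S^1};\kk)\cong\operatorname{Tor}^{\Lambda V\otimes\Lambda V}(\Lambda V,\Lambda V)=HH_*(\Lambda V,\Lambda V).$$

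It remains to pass from $\Lambda V$ back to $A_{PL}(X)$. Since $m_X$ is a quasi-isomorphism of cdgas, the K\"unneth theorem makes $m_X\otimes m_X\colon\Lambda V\otimes\Lambda V\buildrel{\simeq}\over\rightarrow A_{PL}(X)\otimes A_{PL}(X)$ a quasi-isomorphism as well, and the invariance of the differential torsion product under quasi-isomorphisms of the ground algebra together with its modules yields $HH_*(\Lambda V,\Lambda V)\cong HH_*(A_{PL}(X),A_{PL}(X))$, which finishes the proof. I expect this last invariance step to be the main obstacle: rather than merely quoting formal properties of $\operatorname{Tor}$, one must genuinely compare resolutions over the two quasi-isomorphic enveloping algebras $\Lambda V\otimes\Lambda V$ and $A_{PL}(X)\otimes A_{PL}(X)$, for instance by lifting the semifree resolution above along $m_X\otimes m_X$ and checking that the induced map of cofibers is a quasi-isomorphism. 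Every other ingredient --- the semifree resolution of the multiplication and the pushout describing the free loop space --- has already been assembled in the preceding sections.
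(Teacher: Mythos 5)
Your proposal is correct and follows essentially the same route as the paper's own proof: both rest on the fact (FHT, Lemma 14.1) that $\Lambda V\otimes\Lambda V\otimes\Lambda sV$ is a semifree resolution of $\Lambda V$ over $\Lambda V\otimes\Lambda V$, so that $HH_*(\Lambda V,\Lambda V)$ is computed by the cofiber $(\Lambda V\otimes\Lambda sV,\delta)$, already identified with a Sullivan model of $X^{S^1}$, and both then transport the result along $m_X$ using the quasi-isomorphism invariance of Hochschild homology. The only difference is one of emphasis: the paper simply asserts that $m_X$ induces an isomorphism $HH_*(\Lambda V,\Lambda V)\cong HH_*(A_{PL}(X),A_{PL}(X))$, whereas you rightly flag that this invariance is the step requiring a genuine comparison of resolutions over the two enveloping algebras.
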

Replacing $A_{PL}(X)$ by $A_{DR}(M)$
(Remark~\ref{sur les reels formes de De Rham}), this Corollary is a theorem
of Chen~\cite[3.2.3 Theorem]{Brylinski:loopchageo} when $X$ is a
smooth manifold $M$.
\begin{proof}
The quasi-isomorphism of cdgas $m_X:\Lambda V
\buildrel{\simeq}\over\rightarrow A_{PL}(X)$ induces an isomorphism
between Hochschild homologies $$HH_*(m_X,m_X):HH_*(\Lambda V,\Lambda V)\buildrel{\cong}\over\rightarrow HH_*(A_{PL}(X), A_{PL}(X)).$$
By~\cite[Lemma 14.1]{Felix-Halperin-Thomas:ratht},
$\Lambda V\otimes\Lambda V\otimes\Lambda sV$ is a semi-free resolution of
$\Lambda V$ as a $\Lambda V\otimes\Lambda V^{op}$-module.
Therefore the Hochschild homology $HH_*(\Lambda V,\Lambda V)$ can be defined
as  the homology of the cdga $(\Lambda V\otimes\Lambda sV,\delta)$.
We have just seen above that
$H(\Lambda V\otimes\Lambda sV,\delta)$ is isomorphic to the free loop space
cohomology $H^*(X^{S^1};\kk)$.
\end{proof}
We have shown that a Sullivan model of $X^{S^1}$ is of the form
$(\Lambda V\otimes \Lambda sV,\delta)$.
The following theorem of Vigu\'e-Poirrier and Sullivan gives a precise description
of the differential $\delta$.
\begin{theorem}(\cite[Theorem p. 637]{Vigue-Sullivan:homtcg} or~\cite[Theorem 5.11]{Felix-Oprea-Tanre:algmodgeom}\label{differentiel du modele de Sullivan des lacets libres})
Let $X$ be a simply connected topological space.
Let $(\Lambda V,d)$ be a minimal Sullivan model of $X$.
For all $v\in V$, denote by $sv$ an element of degree $\vert v\vert -1$.
Let $s:\Lambda V\otimes \Lambda sV\rightarrow \Lambda V\otimes \Lambda sV$ be the unique
derivation of (upper) degree $-1$ such that on the generators $v$, $sv$, $v\in V$, $s(v)=sv$
and $s(sv)=0$. We have $s\circ s=0$.
Then there exists a unique Sullivan model of $X^{S^1}$ of the form
$(\Lambda V\otimes \Lambda sV,\delta)$ such that $\delta\circ s+s\circ \delta=0$
on $\Lambda V\otimes \Lambda sV$.
\end{theorem}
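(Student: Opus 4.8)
The plan is to produce the differential by hand, dispatch the two algebraic identities by the ``general method'' of checking derivations on generators (exactly as in Section~\ref{modeles de Sullivan des spheres}), and only then identify the resulting cdga with the free loop space model obtained in diagram~(\ref{diagram Sullivan model lacets libres}); the uniqueness clause will then be essentially free. First, $s$ is well defined: the linear map $V\oplus sV\rightarrow\Lambda V\otimes\Lambda sV$ sending $v\mapsto sv$ and $sv\mapsto 0$ extends, by (2) of Property~\ref{proprietes universelles}, to a unique derivation $s$ of degree $-1$. Since $s$ has odd degree, $s\circ s=\frac{1}{2}[s,s]$ is again a derivation, and it vanishes on the generators ($s(sv)=0$ and $s(0)=0$), so by the uniqueness in Property~\ref{proprietes universelles} we get $s\circ s=0$. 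I then define $\delta$ on generators by $\delta(v)=dv$ and $\delta(sv)=-s(dv)$ and extend it to a degree $+1$ derivation by the same Property. Because $\delta$ and the extension of $d$ are derivations agreeing with $d$ on $V$, they agree on all of $\Lambda V$, so $\delta$ restricts to $d$ there; moreover $dv\in\Lambda^{\geq 2}V$ by minimality, so $\delta(sv)=-s(dv)$ is decomposable and $\Lambda V\hookrightarrow(\Lambda V\otimes\Lambda sV,\delta)$ is a minimal relative Sullivan model.

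Next I verify the two identities. The graded commutator $\delta\circ s+s\circ\delta$ of two derivations is itself a derivation (of degree $0$), so it suffices to test it on generators: on $v\in V$ it equals $\delta(sv)+s(dv)=-s(dv)+s(dv)=0$, and on $sv$ it equals $\delta(0)+s(-s(dv))=-s\circ s(dv)=0$ by the previous paragraph. Hence $\delta\circ s+s\circ\delta=0$. Likewise $\delta\circ\delta=\frac{1}{2}[\delta,\delta]$ is a derivation, and on generators $\delta\circ\delta(v)=\delta(dv)=d(dv)=0$, while $\delta\circ\delta(sv)=-\delta(s(dv))=s(\delta(dv))=s(d(dv))=0$, where I used $\delta\circ s=-s\circ\delta$ together with $\delta|_{\Lambda V}=d$ and $d\circ d=0$. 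Thus $(\Lambda V\otimes\Lambda sV,\delta)$ is a cdga satisfying the required anticommutation relation, and in the two worked cases of Section~\ref{modeles de Sullivan des spheres} (take $X=S^{2n+1}$, resp.\ $S^{2n}$) it reproduces the models $(\Lambda(v,sv),0)$, resp.\ $(\Lambda(v,w,sv,sw),\delta)$ with $\delta(sw)=-2v\,sv$, already found there.

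The crux is to show that this $\delta$ really yields a Sullivan model of $X^{S^1}$, i.e.\ that $(\Lambda V\otimes\Lambda sV,\delta)$ is quasi-isomorphic to the model $(\Lambda V\otimes\Lambda sV,\delta_0)$ of diagram~(\ref{diagram Sullivan model lacets libres}). Both differentials restrict to $d$ on $\Lambda V$, are minimal (their values on each $sv$ are decomposable), and induce the zero differential on the indecomposables $Q=V\oplus sV$, so the two relative Sullivan algebras have the same linear data. I would then construct a comparison isomorphism $\Phi\colon(\Lambda V\otimes\Lambda sV,\delta)\buildrel{\cong}\over\rightarrow(\Lambda V\otimes\Lambda sV,\delta_0)$ fixing $\Lambda V$, by induction on the degree of the generators $sv$: writing $\Phi(sv)=sv+\theta_v$ with $\theta_v$ decomposable, the chain-map condition reduces on $sv$ to solving $\delta_0(\theta_v)=-s(dv)-\delta_0(sv)+(\text{terms already fixed at lower stages})$, which can be handled stepwise by the same acyclicity mechanism used in the proof of Proposition~\ref{modele de Sullivan de la multiplication} (equivalently, one chooses the element $\gamma$ there so that its image under the diagonal pushout is $s(dv)$, making $\delta_0$ equal to $\delta$ outright, and matching the Hochschild differential of Corollary~\ref{theoreme de Chen sur lacets libres}). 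Composing $\Phi$ with the quasi-isomorphism $(\Lambda V\otimes\Lambda sV,\delta_0)\buildrel{\simeq}\over\rightarrow A_{PL}(X^{S^1})$ then exhibits $(\Lambda V\otimes\Lambda sV,\delta)$ as a model. I expect this identification—checking that the obstruction to each inductive step vanishes, so that $\Phi$ (or the good choice of $\gamma$) exists—to be the main obstacle; everything preceding it is formal.

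Uniqueness is then immediate. Any Sullivan model of $X^{S^1}$ of the stated form is a minimal relative Sullivan model over $(\Lambda V,d)$, so its differential $\delta$ restricts to $d$ on $\Lambda V$; evaluating the imposed identity $\delta\circ s+s\circ\delta=0$ on a generator $v\in V$ gives $\delta(sv)+s(dv)=0$, that is $\delta(sv)=-s(dv)$. Hence $\delta$ is forced on every generator $v$ and $sv$, and therefore on all of $\Lambda V\otimes\Lambda sV$, so the model satisfying $\delta\circ s+s\circ\delta=0$ is unique and coincides with the one constructed above.
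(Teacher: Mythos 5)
Your formal algebra is all correct and cleanly organized: the constructions of $s$ and $\delta$ via Property~\ref{proprietes universelles}, the verifications of $s\circ s=0$, $\delta\circ\delta=0$ and $\delta\circ s+s\circ\delta=0$ by testing derivations on generators, the minimality of $\Lambda V\hookrightarrow(\Lambda V\otimes\Lambda sV,\delta)$, and the uniqueness clause (correctly read as: the anti-commutation relation forces $\delta(sv)=-s(dv)$ once $\delta$ restricts to $d$ on $\Lambda V$). Be aware that the paper itself offers no proof of this theorem --- it is quoted from \cite{Vigue-Sullivan:homtcg} and \cite{Felix-Oprea-Tanre:algmodgeom} --- so your argument has to stand entirely on its own; and it does not, because the existence half (that this particular $\delta$ produces a cdga quasi-isomorphic to $A_{PL}(X^{S^1})$) is the whole content of the theorem, and your third paragraph only sketches it.

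Concretely, your inductive comparison $\Phi(sv)=sv+\theta_v$ requires, at each step, that the $\delta_0$-cocycle $\Phi(-s(dv))-\delta_0(sv)$, of degree $\vert v\vert$, be a $\delta_0$-coboundary. That is a genuine obstruction class in $H^{\vert v\vert}(\Lambda V\otimes\Lambda sV,\delta_0)\cong H^{\vert v\vert}(X^{S^1};\kk)$, which is nonzero in general; neither ``both differentials are minimal over $(\Lambda V,d)$'' nor ``they have the same linear data'' makes it vanish, since uniqueness of minimal relative models applies to two models of the \emph{same} morphism, and whether $(\Lambda V\otimes\Lambda sV,\delta)$ models $A_{PL}(ev)\circ m_X$ is precisely what is at stake. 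The same hole reappears in your fallback of re-choosing $\gamma$ in the proof of Proposition~\ref{modele de Sullivan de la multiplication} so that its image under the pushout map $\mu_*:\Lambda V\otimes\Lambda V\otimes\Lambda sV\rightarrow\Lambda V\otimes\Lambda sV$ equals $s(dv)$: correcting an arbitrary primitive $\gamma_0$ requires a $D$-cocycle of $\ker\mu_*$ mapping onto $\mu_*(\gamma_0)-s(dv)$, and the obstruction now lives in $H(\ker\mu_*,D)$, which is \emph{not} zero --- already for $X=S^{2n+1}$ the middle term has the cohomology of a point's worth of $\Lambda v$ while the quotient $(\Lambda(v,sv),0)$ has infinite-dimensional cohomology, so $\ker\mu_*$ carries plenty of cohomology. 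The missing idea, which the cited proofs supply, is a \emph{global} construction of the differential $D$ on $\Lambda V\otimes\Lambda V\otimes\Lambda sV$: one builds a degree $-1$ map $\rho:\Lambda V\rightarrow\Lambda V\otimes\Lambda V\otimes\Lambda sV$ with $\rho(v)=sv$, which is a derivation relative to the two inclusions $\lambda_0,\lambda_1:\Lambda V\rightarrow\Lambda V\otimes\Lambda V\otimes\Lambda sV$ (i.e. $\rho(ab)=\rho(a)\lambda_1(b)+(-1)^{\vert a\vert}\lambda_0(a)\rho(b)$) and satisfies a homotopy identity of the form $D\rho+\rho D=\lambda_0-\lambda_1$, and one \emph{defines} $D(sv):=\lambda_0(v)-\lambda_1(v)-\rho(dv)$. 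Then $\mu_*\circ\rho$ is an ordinary derivation sending $v$ to $sv$, hence equals $s$ by the uniqueness in Property~\ref{proprietes universelles}, so the pushout differential is your $\delta$ on the nose, and the homotopy $\rho$ simultaneously yields the quasi-isomorphism property of the middle term. Without such a device (or an equivalent one, e.g. the comparison with the Hochschild complex), the central step of your proof remains, as you yourself flag, a hope rather than an argument.
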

\begin{remark}\label{modele de la fibration des lacets libres}
Consider the free loop fibration
$\Omega X\hookrightarrow X^{S^1}\buildrel{ev}\over\twoheadrightarrow X$.
Since $(\Lambda V,d)\hookrightarrow (\Lambda V\otimes \Lambda sV,\delta)$
is a minimal relative Sullivan model of $A_{PL}(ev)\circ m_X$, by Section~\ref{modele de Sullivan d'une fibration}, 
$$
{\Bbbk}\otimes_{(\Lambda V,d)}(\Lambda V\otimes \Lambda sV,\delta)\cong (\Lambda sV,\bar{\delta})
$$
is a minimal Sullivan model of $\Omega X$. Let $v\in V$. By
Theorem~\ref{differentiel du modele de Sullivan des lacets libres},
$\delta(sv)=-s\delta v=-sdv$.
Since $dv\in\Lambda^{\geq 2}V$, $\delta(sv)\in \Lambda^{\geq 1}V\otimes \Lambda^1 sV$.
Therefore $\bar{\delta}=0$.
Since $\Omega X$ is a $H$-space, this follows also from
Theorem~\ref{model H-space} and from the unicity of minimal Sullivan models (part 1) of Theorem~\ref{modele minimal unique et groupes d'homotopie}).
\end{remark}

\section{Examples of Sullivan models}
\subsection{Sullivan model of spaces with polynomial cohomology}
The following proposition is a straightforward generalisation~\cite[p. 144]{Felix-Halperin-Thomas:ratht}
of the Sullivan model of odd-dimensional spheres (see section~\ref{modeles de Sullivan des spheres}).
\begin{proposition}\label{modele de Sullivan polynomial cohomology}
Let $X$ be a path connected topological space such that its cohomology $H^*(X;\kk)$ is a free graded commutative algebra $\Lambda V$
(for example, polynomial).
Then a Sullivan model of $X$ is $(\Lambda V,0)$.
\begin{example}
Odd-dimensional spheres $S^{2n+1}$, complex or quartenionic Stiefel manifolds~\cite[Example 2.40]{Felix-Oprea-Tanre:algmodgeom} $V_k(\mathbb{C}^n)$ or $V_k(\mathbb{H}^n)$,
classifying spaces $BG$ of simply connected Lie groups~\cite[Example 2.42]{Felix-Oprea-Tanre:algmodgeom}, connected Lie groups $G$ as we will see in the
following section.
\end{example}
\end{proposition}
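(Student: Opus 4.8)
The plan is to imitate, almost verbatim, the construction of the Sullivan model of an odd sphere carried out in section~\ref{modeles de Sullivan des spheres}, the only difference being that several generators of possibly different degrees are handled simultaneously. Write $A:=A_{PL}(X)$, which is a \emph{commutative} dga by Theorem~\ref{quasi-isos entre A_PL et les cochaines}, and fix the hypothesised isomorphism of graded algebras $H^*(A)\cong\Lambda V$. The goal is to produce a quasi-isomorphism of cdgas $m:(\Lambda V,0)\rightarrow A$. The conceptual point is that, because $\Lambda V$ is \emph{free}, the zero differential can actually be realised; this is in contrast with the even sphere, where the relation forcing $[v^2]=0$ obliged us to introduce a generator $w$ with $dw=v^2$.

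First I would choose a homogeneous basis $\{v_\alpha\}$ of $V$. Under the identification $V\subseteq\Lambda V\cong H^*(A)$ each $v_\alpha$ is a cohomology class, so I pick a cocycle representative $z_\alpha\in A$ with $[z_\alpha]=v_\alpha$ and $\vert z_\alpha\vert=\vert v_\alpha\vert$. This defines a morphism of graded vector spaces $V\rightarrow A$, $v_\alpha\mapsto z_\alpha$, whose image consists of cocycles. Applying part 1) of Property~\ref{proprietes universelles} with $B=\kk$ extends it uniquely to a morphism of graded algebras $m:\Lambda V\rightarrow A$. Since each $z_\alpha$ is a cocycle and $d_A$ is a derivation, $m$ sends every product of generators to a cocycle, so $d_A\circ m=0=m\circ 0$ and $m$ is a morphism of cdgas $(\Lambda V,0)\rightarrow A$.

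Next I would check that $H(m)$ is an isomorphism, exactly as for the odd sphere where one verifies that a basis goes to a basis. Because the differential of $(\Lambda V,0)$ vanishes, $H(\Lambda V,0)=\Lambda V$, and by construction $H(m)$ restricts on the generators $V$ to the given inclusion $V\hookrightarrow\Lambda V\cong H^*(A)$. As $H(m)$ is an algebra map out of the \emph{free} algebra $\Lambda V$, it is determined by its values on $V$ and hence coincides with the isomorphism $\Lambda V\cong H^*(A)$; in particular it is bijective in every degree.

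It remains to confirm that $(\Lambda V,0)$ really is a Sullivan model in the sense of the definitions. Since $X$ is path-connected, $H^0(A)\cong\kk$; were $V$ to contain a generator in degree $0$ (necessarily even), $\Lambda V$ would carry a polynomial subalgebra concentrated in degree $0$ and $H^0$ could not be one-dimensional, so $V=V^{\geq 1}$. The filtration condition is then trivial: taking $V(0):=V$ and all higher $V(k)=0$, the requirement that $d$ map $V(0)$ into $B\otimes\kk$ holds because $d=0$ (and for the same reason the model is even minimal). I expect the only genuinely non-mechanical point to be exactly this last bookkeeping, namely recognising that freeness of the cohomology is precisely what permits the differential to be taken to be zero, and that path-connectedness rules out degree-zero generators; everything else is a direct transcription of the odd-sphere argument.
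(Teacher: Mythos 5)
Your proof is correct and is essentially the argument the paper intends: the proposition is stated there merely as a straightforward generalisation of the odd-sphere construction of section~\ref{modeles de Sullivan des spheres}, which is exactly what you carry out (cocycle representatives of a basis of $V$, extension to a cdga morphism $m:(\Lambda V,0)\rightarrow A_{PL}(X)$ by the universal property of $\Lambda V$, and identification of $H(m)$ with the given isomorphism because two algebra maps out of a free graded commutative algebra agreeing on $V$ coincide). Your final bookkeeping --- path-connectedness forces $V^0=0$, and $d=0$ makes the filtration $V(0)=V$ and the minimality condition trivial --- is also the right way to verify that $(\Lambda V,0)$ is indeed a (minimal) Sullivan model.
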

\subsection{Sullivan model of an $H$-space}
An \emph{$H$-space} is a pointed topological space $(G,e)$ equipped with a pointed continuous map $\mu:(G,e)\times (G,e)\rightarrow (G,e)$
such that the two pointed maps $g\mapsto \mu(e,g)$ and $g\mapsto \mu(g,e)$ are pointed homotopic to the identity map of $(G,e)$.
\begin{theorem}~\cite[Example 3 p. 143]{Felix-Halperin-Thomas:ratht}\label{model H-space}
Let $G$ be a path connected $H$-space such that $\forall n\in\mathbb{N}$, $H_n(G;\kk)$ is finite dimensional. Then

1) its cohomology $H^*(G;\kk)$ is a free graded commutative algebra $\Lambda V$,

2) $G$ has a Sullivan model of the form $(\Lambda V,0)$, that is with zero differential.
\end{theorem}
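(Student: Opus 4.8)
The plan is to establish part 1) purely algebraically, via the structure theorem for Hopf algebras, and then to read off part 2) directly from Proposition~\ref{modele de Sullivan polynomial cohomology}.

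First I would promote the $H$-space multiplication to a coproduct on cohomology. Since $G$ is path connected, $H^0(G;\kk)\cong\kk$, and since each $H_n(G;\kk)$ is finite dimensional, the K\"unneth theorem gives an isomorphism of graded algebras $H^*(G\times G;\kk)\cong H^*(G;\kk)\otimes H^*(G;\kk)$. Composing the algebra map $H^*(\mu)$ with this isomorphism produces a coproduct $\Delta:H^*(G;\kk)\to H^*(G;\kk)\otimes H^*(G;\kk)$ which is itself a morphism of graded algebras, and the two unit conditions, that $g\mapsto\mu(e,g)$ and $g\mapsto\mu(g,e)$ are homotopic to the identity, translate precisely into the counit axiom. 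Thus $H^*(G;\kk)$ is a connected, graded commutative Hopf algebra, finite dimensional in each degree.

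The core of the argument is then the structure theorem of Hopf, Leray and Borel over a field of characteristic $0$: such a Hopf algebra is free as a graded commutative algebra, that is, isomorphic to $\Lambda V$ for some $V=V^{\geq 1}$. A clean route is to dualize. The graded dual $H_*(G;\kk)$ is a connected cocommutative Hopf algebra, so by the Milnor--Moore theorem it is the universal enveloping algebra $U(L)$ of its graded Lie algebra of primitives $L$; the Poincar\'e--Birkhoff--Witt theorem identifies $U(L)$ with $\Lambda L$ as graded coalgebras, and dualizing this coalgebra isomorphism yields an algebra isomorphism $H^*(G;\kk)\cong\Lambda V$ with $V=\Hom(L,\kk)$. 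Since the convention $\Lambda V=E(V^{odd})\otimes S(V^{even})$ already incorporates the Koszul sign rule, odd primitives automatically furnish the exterior generators and even primitives the polynomial generators, so no separate case analysis is required. This proves 1).

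For part 2), now that $H^*(G;\kk)$ is a free graded commutative algebra $\Lambda V$, Proposition~\ref{modele de Sullivan polynomial cohomology} applies verbatim and delivers the Sullivan model $(\Lambda V,0)$ with zero differential, which is exactly the assertion. I expect the only genuine obstacle to lie in the structure theorem of the second paragraph, specifically in ensuring that the characteristic $0$ hypothesis is really used in the Milnor--Moore and Poincar\'e--Birkhoff--Witt steps: in positive characteristic the conclusion fails and Borel's theorem produces truncated polynomial factors instead. The Hopf-algebra bookkeeping of the first paragraph and the citation of Proposition~\ref{modele de Sullivan polynomial cohomology} in the last are both routine.
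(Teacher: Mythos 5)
Your overall architecture matches the paper's proof: show that $H^*(G;\kk)$ is a connected commutative graded Hopf algebra, invoke a structure theorem in characteristic $0$ to get freeness, then quote Proposition~\ref{modele de Sullivan polynomial cohomology} for part 2). But there is a genuine gap in your ``clean route'' through Milnor--Moore and Poincar\'e--Birkhoff--Witt: both of those theorems require an \emph{associative} Hopf algebra, and the hypotheses give you no associativity. The definition of $H$-space used here only demands that $\mu(e,-)$ and $\mu(-,e)$ be homotopic to the identity; $\mu$ need not be homotopy associative. Consequently the coproduct on $H^*(G;\kk)$ obtained from $H^*(\mu)$ and the K\"unneth isomorphism need not be coassociative, and dually the Pontryagin product on $H_*(G;\kk)$ need not be associative. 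Without associativity the primitives of $H_*(G;\kk)$ do not form a graded Lie algebra (the Jacobi identity for the commutator bracket is proved using associativity), the universal enveloping algebra $U(L)$ and the comparison map $U(L)\to H_*(G;\kk)$ are not even defined, and Milnor--Moore cannot be applied. So your argument proves the theorem only for homotopy-associative $H$-spaces (topological groups, loop spaces), not in the stated generality.

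This is precisely the point the paper is careful about: its proof observes that $H^*(G;\kk)$ is a connected commutative graded Hopf algebra ``(not necessarily associative)'' and cites the Hopf--Borel theorem in characteristic $0$ \cite[VII.10.16]{DoldA:lecat}, whose proof uses only that the underlying algebra is commutative and associative (the cup product) and that the coproduct is an algebra map with counit --- coassociativity is never used. To repair your proof, replace the dualization/Milnor--Moore step by this form of Hopf--Borel (or reprove it directly, inducting on degree and using primitivity of lowest-degree generators). Your first paragraph (the K\"unneth bookkeeping) and your last paragraph (deducing part 2) from Proposition~\ref{modele de Sullivan polynomial cohomology}) are both fine and agree with the paper.
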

\begin{proof}
1) Let $A$ be $H^*(G;\kk)$ the cohomology of $G$. By hypothesis, $A$
is a connected commutative graded Hopf algebra (not necessarily associative).
Now the theorem of Hopf-Borel in caracteristic $0$~\cite[VII.10.16]{DoldA:lecat} says that $A$ is a free graded commutative algebra.

2) By Proposition~\ref{modele de Sullivan polynomial cohomology}, 1) and 2) are equivalent.
\end{proof}
\begin{example}
Let $G$ be a path-connected Lie group (or more generally a $H$-space
with finitely generated integral homology).
Then $G$ has a Sullivan model of the form $(\Lambda V,0)$.
By Theorem~\ref{modele minimal unique et groupes d'homotopie}, $V^n$ and $\pi_n(G)\otimes_\mathbb{Z}\kk$ have the same
dimension for any $n\in\mathbb{N}$.
Since $H_*(G;\kk)$ is of finite (total) dimension, $V$ and therefore
$\pi_*(G)\otimes_\mathbb{Z}\kk$ are concentrated in odd degrees.
In fact, more generally~\cite[Theorem 6.11]{Browder:torsionH-space},
$\pi_2(G)=\{0\}$.
Note, however that $\pi_4(S^3)=\mathbb{Z}/2\mathbb{Z}\neq \{0\}$.
\end{example}
\subsection{Sullivan model of projective spaces}
Consider the complex projective space $\mathbb{CP}^n$, $n\geq 1$.
The construction of the Sullivan model of $\mathbb{CP}^n$ is similar to the construction of the Sullivan model of $S^2=\mathbb{CP}^ 1$ done in section~\ref{modeles de Sullivan des spheres}:

The cohomology algebra $H^*(A_{PL}(\mathbb{CP}^n))\cong H^*(\mathbb{CP}^n)$ is the truncated polynomial algebra $\frac{\kk[x]}{x^{n+1}=0}$
where $x$ is an element of degree $2$.
Let $v$ be a cycle of $A_{PL}(\mathbb{CP}^n)$ representing $x:=[v]$. 
The inclusion of complexes $(\kk v,0)\hookrightarrow A_{PL}(\mathbb{CP}^n)$
extends to a unique morphism of cdgas $m:(\Lambda v,0)\rightarrow
A_{PL}(\mathbb{CP}^n)$(Property~\ref{proprietes universelles}).
Since $[v^{n+1}]=x^{n+1}=0$, there exists an element $\psi\in A_{PL}(\mathbb{CP}^n)$ of degree $2n+1$ such that $d\psi=v^{n+1}$.
 Let $w$ denote another element of degree $2n+1$.
 Let $d$ be the unique derivation of $\Lambda(v,w)$ such that $d(v)=0$ and $d(w)=v^{n+1}$.
 The unique morphism of graded algebras $m:(\Lambda(v,w),d)\rightarrow A_{PL}(\mathbb{CP}^n)$ such that $m(v)=v$ and $m(w)=\psi$, is a morphism of cdgas.
 In homology, $H(m)$ sends $1$, $[v]$, \dots, $[v^n]$ to $1$, $x$, \dots, $x^n$. Therefore $m$ is a quasi-isomorphism.
 
 More generally, let $X$ be a simply connected space such that $H^*(X)$ is a truncated polynomial algebra $\frac{\kk[x]}{x^{n+1}=0}$
where $n\geq 1$ and $x$ is an element of even degree $d\geq 2$.
Then the Sullivan model of $X$ is $(\Lambda(v,w),d)$ where $v$ is an element of degree $d$, $w$ is an element of degree $d(n+1)-1$, $d(v)=0$ and $d(w)=v^{n+1}$.
\subsection{Free loop space cohomology for even-dimensional spheres and projective spaces}
In this section, we compute the free loop space cohomology of any simply connected space $X$ whose cohomology is a truncated polynomial algebra $\frac{\kk[x]}{x^{n+1}=0}$
where $n\geq 1$ and $x$ is an element of even degree $d\geq 2$.

Mainly, this is the even-dimensional sphere $S^d$ ($n=1$), the complex
projective space $\mathbb{CP}^n$ ($d=2$), the quaternionic projective space
$\mathbb{HP}^n$ ($d=4$) and the Cayley plane $\mathbb{OP}^2$ ($n=2$
and $d=8$).

In the previous section, we have seen that the minimal Sullivan model of $X$ is $(\Lambda(v,w),d(v)=0,d(w)=v^{n+1})$ where $v$ is an element of degree $d$ and $w$ is an element of degree $d(n+1)-1$.
By the constructive proof of Proposition~\ref{modele de Sullivan de la multiplication}, the multiplication $\mu$ of this minimal Sullivan model
$(\Lambda(v,w),d)$ admits the relative Sullivan model
$(\Lambda(v,w)\otimes \Lambda(v,w)\otimes \Lambda(sv,sw),D)$
where $$D(1\otimes 1\otimes sv)=v\otimes 1\otimes 1-1\otimes v\otimes 1\text{ and}$$
$$D(1\otimes 1\otimes sw)=w\otimes 1\otimes 1-1\otimes w\otimes 1-\sum_{i=0}^n v^i\otimes v^{n-i}\otimes sv.$$

Therefore, by taking the pushout along $\mu$ of this relative Sullivan model (diagram~(\ref{diagram Sullivan model lacets libres})),
or simply by applying Theorem~\ref{differentiel du modele de Sullivan des lacets libres}, a relative Sullivan model of $A_{PL}(ev)\circ m_X$
is given by the inclusion of cdgas
$
(\Lambda(v,w),d)\hookrightarrow (\Lambda(v,w,sv,sw),\delta)
$
where $\delta(sv)=-sd(v)=0$ and $\delta(sw)=-s(v^{n+1})=-(n+1)v^nsv$.
Consider the pushout square of cdgas
$$
\xymatrix{
(\Lambda(v,w),d)\ar[r]\ar[d]^\theta_\simeq
& (\Lambda(v,w,sv,sw),\delta)\ar[d]_\simeq^{\theta\otimes_{\Lambda (v,w)}\Lambda (sv,sw)} \\
(\frac{\kk[v]}{v^{n+1}=0},0)\ar[r]
& \left(\frac{\kk[v]}{v^{n+1}=0}\otimes \Lambda (sv,sw),\bar{\delta}\right).
}
$$
Here, since $\theta$ is a quasi-isomorphism, the pushout morphism $\theta\otimes_{\Lambda (v,w)}\Lambda (sv,sw)$ along the relative
Sullivan model $
\Lambda(v,w)\hookrightarrow \Lambda(v,w,sv,sw)
$
is also a quasi-isomorphism~\cite[Lemma 14.2]{Felix-Halperin-Thomas:ratht}.
Therefore, $H^*(X^{S^1};\kk)$ is the graded vector space
\begin{equation*}
{\Bbbk}\oplus \bigoplus_{1\leq p\leq n,\; i\in\mathbb{N}} {\Bbbk}v^{p}(sw)^i
\oplus \bigoplus_{0\leq p\leq n-1,\; i\in\mathbb{N}} {\Bbbk}v^{p}sv(sw)^i.
\end{equation*}
(In~\cite[Section 8]{MenichiL:cohrfl}, the author extends these
rational computations over any commutative ring.)
Since for all $i\in\mathbb{N}$, the degree of $v(sw)^{i+1}$ is
strictly greater than the degree of $v^n(sw)^i$, the generators $1$,
$v^{p}(sw)^i$, $1\leq p\leq n$, $i\in\mathbb{N}$, have all distinct
(even) degrees.
Since for all $i\in\mathbb{N}$, the degree of $sv(sw)^{i+1}$ is
strictly greater than the degree of $v^{n-1}sv(sw)^i$, the generators
$v^{p}sv(sw)^i$, $0\leq p\leq n-1$, $i\in\mathbb{N}$, have also distinct
(odd) degrees.
Therefore, for all $p\in\mathbb{N}$, $\text{Dim
}H^p(X^{S^1};\kk)\leq 1$.

At the end of section~\ref{exemple de model relatif de Sullivan},
we have shown the same inequalities when $X$ is an odd-dimensional
sphere,
or more generally for a simply-connected space $X$ whose cohomology
$H^*(X;\kk)$ is an exterior algebra $\Lambda x$ on an odd degree
generator $x$.
Since every finite dimensional graded commutative algebra
generated by a single element $x$  is either $\Lambda x$ or
$\frac{\kk[x]}{x^{n+1}=0}$, we have shown 
the following proposition:
\begin{proposition}\label{monogene donne Betti bornes}
Let $X$ be a simply connected topological space
such that its  cohomology $H^*(X;\kk)$ is generated by a single element and is finite  dimensional.
Then the
sequence of Betti numbers of the free loop space on $X$, $b_n:=\text{dim } H^n(X^{S^1};\kk)$
is bounded.
\end{proposition}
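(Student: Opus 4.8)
The plan is to reduce the statement to the two free-loop-space computations already carried out in the preceding subsections, once the cohomology algebra has been put into normal form. First I would classify $H^*(X;\kk)$. Let $x$ be a generator of positive degree. If $\vert x\vert$ is odd, graded commutativity gives $x^2=(-1)^{\vert x\vert\cdot\vert x\vert}x^2=-x^2$, hence $2x^2=0$; since $\kk$ has characteristic $0$ this forces $x^2=0$ and therefore $H^*(X;\kk)=\Lambda x=\kk\oplus\kk x$. If $\vert x\vert$ is even, then $H^*(X;\kk)$ is a quotient of the polynomial algebra $\kk[x]$ by a homogeneous ideal, and the only finite-dimensional such quotients are the truncated polynomial algebras $\kk[x]/(x^{n+1})$ for some $n\geq 1$ (the degenerate value $n=0$ giving the trivial algebra $\kk$). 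This is precisely the dichotomy invoked in the paragraph preceding the statement.

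With the classification in hand I would simply invoke the two explicit models. In the exterior case $H^*(X;\kk)=\Lambda x$ with $\vert x\vert$ odd, the discussion at the end of Section~\ref{exemple de model relatif de Sullivan} produces the Sullivan model $(\Lambda(v,sv),0)$ of $X^{S^1}$ and exhibits the basis $(sv)^k$, $v(sv)^k$, whose elements have pairwise distinct degrees; hence $\dim H^p(X^{S^1};\kk)\leq 1$ for every $p$. In the truncated case $H^*(X;\kk)=\kk[x]/(x^{n+1})$ with $\vert x\vert$ even of degree $d\geq 2$, the computation of the immediately preceding subsection — applying Theorem~\ref{differentiel du modele de Sullivan des lacets libres} to obtain the model $(\Lambda(v,w,sv,sw),\delta)$ and passing to the pushout $(\kk[v]/(v^{n+1})\otimes\Lambda(sv,sw),\bar\delta)$ — produces an explicit basis all of whose members again have distinct degrees, so that once more $\dim H^p(X^{S^1};\kk)\leq 1$ for every $p$.

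Combining the two cases, in either situation the Betti number $b_p=\dim H^p(X^{S^1};\kk)$ is at most $1$ for all $p$, so the sequence $(b_p)_{p\in\N}$ is bounded, which is the assertion. There is no genuine analytic difficulty here: the entire content of the proof is the organizational observation that the single-generator and finite-dimensionality hypotheses leave only the exterior and truncated-polynomial normal forms, both of which have already been resolved. The one point deserving a moment's care is confirming that this dichotomy is exhaustive — that the parity of $\vert x\vert$ together with finite-dimensionality truly forces one of the two normal forms — but this is the elementary argument given in the first paragraph.
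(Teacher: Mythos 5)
Your proposal is correct and takes essentially the same route as the paper: the paper obtains the proposition exactly by combining the dichotomy (a finite-dimensional graded commutative algebra generated by one element is either $\Lambda x$ on an odd-degree generator or $\kk[x]/(x^{n+1})$ on an even-degree one) with the two explicit free loop space computations of the preceding subsections, each of which yields $\dim H^p(X^{S^1};\kk)\leq 1$ for all $p$. The only difference is cosmetic: you spell out the elementary verification of the dichotomy, which the paper simply asserts.
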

The goal of the following section will be to prove the converse of this proposition.
\section{Vigu\'e-Poirrier-Sullivan theorem on closed geodesics}
The goal of this section is to prove
(See section~\ref{proofofViguePoirrierSullivantheorem})
the following theorem due to
Vigu\'e-Poirrier and Sullivan.
\subsection{Statement of Vigu\'e-Poirrier-Sullivan theorem and of its generalisations}
\begin{theorem}(\cite[Theorem p. 637]{Vigue-Sullivan:homtcg} or~\cite[Proposition 5.14]{Felix-Oprea-Tanre:algmodgeom}\label{nombres de betti lacets libres pas bornes})
Let $M$ be a simply connected topological space
such that the rational cohomology of $M$, $H^*(M;\mathbb{Q})$ is of finite (total) dimension
(in particular, vanishes in higher degrees).

If the cohomology algebra $H^*(M;\mathbb{Q})$ requires at least two generators then the
sequence of Betti numbers of the free loop space on $M$, $b_n:=\text{dim } H^n(M^{S^1};\mathbb{Q})$
is unbounded.
\end{theorem}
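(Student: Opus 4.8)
The plan is to carry out the whole argument inside the model of the free loop space. Let $(\Lambda V,d)$ be the minimal Sullivan model of $M$; by the previous sections $H^*(M^{S^1};\kk)$ is the cohomology of $(\Lambda V\otimes\Lambda sV,\delta)$, with $\delta$ determined on generators by $\delta(sv)=-s(dv)$ (Theorem~\ref{differentiel du modele de Sullivan des lacets libres}), and by Remark~\ref{modele de la fibration des lacets libres} the fibre $\Omega M$ has model $(\Lambda sV,0)$, so $H^*(\Omega M;\kk)=\Lambda sV$ as a graded vector space. Since $M$ is simply connected, $V=V^{\geq 2}$. The goal is to produce arbitrarily large Betti numbers in $H(\Lambda V\otimes\Lambda sV,\delta)$.

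The first step is to translate the hypothesis into a statement about $V$: if $H^*(M;\kk)$ is not generated by a single element, then $\dim V^{\mathrm{odd}}\geq 2$. I would prove the contrapositive. Assume $\dim V^{\mathrm{odd}}\leq 1$; writing $x_1,\dots,x_p$ for the even generators and $y$ for the (at most one) odd generator, the only odd elements of $\Lambda V$ are multiples of $y$, so $dx_i=g_i(x)\,y$ and $dy=P(x)$ with $g_i,P\in\kk[x]$. Applying $d^2=0$ to $dx_i$ gives $g_i\,P=0$ in the integral domain $\kk[x]$; as the case $P=0$ would force $H^{\mathrm{even}}(M;\kk)$ to be infinite dimensional unless the $g_i$ vanish, we get $d|_{V^{\mathrm{even}}}=0$. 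Hence $H^*(M;\kk)\cong\kk[x_1,\dots,x_p]/(dy)$, and finiteness forces $p\leq 1$, so $H^*(M;\kk)$ is $\Lambda(y)$ or $\kk[x]/(x^{m})$: monogenic, a bounded case of Proposition~\ref{monogene donne Betti bornes}. Consequently two generators yield two odd generators $y_1,y_2\in V$, and therefore two generators $sy_1,sy_2$ of \emph{even} degree in $\Lambda V\otimes\Lambda sV$.

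The second step is to show that these two even generators force polynomial growth. In $H^*(\Omega M)=\Lambda sV$ they already span a polynomial algebra $\kk[sy_1,sy_2]$, whose dimension grows linearly along the degrees $a|sy_1|+b|sy_2|$; the task is to see this growth survive into $H^*(M^{S^1})$. I would pass to the associated pure (bigraded) model and filter by word length in the $sx_i$, so that the $E_1$–page is $H^*(M)\otimes\kk[sy_1,sy_2,\dots]\otimes\Lambda(sx_1,\dots)$ with a Koszul-type differential read off from $\delta(sy_j)=-s(dy_j)$. A nonzero class $\omega$ of top degree in the finite-dimensional algebra $H^*(M)$ satisfies $\omega\cdot H^{+}(M)=0$, so the family $\{\omega\cdot sy_1^{a}sy_2^{b}\}$ consists of cocycles lying in the $sx$–free row and survives to $E_2$; in a fixed total degree these classes lie in different word-length components of $\Lambda V\otimes\Lambda sV$ and are counted together, so their number grows linearly, pointing to a pole of order $\geq 2$ at $t=1$ in the Poincar\'e series of $M^{S^1}$. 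The cleanest check is for a product, where $(X\times Y)^{S^1}=X^{S^1}\times Y^{S^1}$ and Poincar\'e series simply multiply.

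The main obstacle is survival beyond $E_2$. At the chain level $\omega\cdot sy_1^{a}sy_2^{b}$ is not a $\delta$-cocycle — the products $\omega\cdot(\text{positive part of }dy_j)$ vanish only in cohomology, not in $\Lambda V$ — so one must control the higher differentials and show they cannot cancel the linear growth carried by $\kk[sy_1,sy_2]$. The delicate comparison is with the monogenic case $\kk[x]/(x^{m})$, where a single suspension class is cancelled down to bounded growth by $\delta(sw)=-s(x^{m})$; the point is that finiteness of $H^*(M;\kk)$ and the socle property of $\omega$ leave at least one polynomial degree of freedom when there are two even suspension generators. Making this quantitative — by computing the exact rational Poincar\'e series in the elliptic case and reading off the order of its pole at $t=1$, and by invoking the unbounded growth of $\Lambda sV=H^*(\Omega M)$ when $\dim V=\infty$ — is where the real work of the theorem lies.
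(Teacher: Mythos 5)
Your first step (showing $\dim V^{\mathrm{odd}}\geq 2$) is essentially the paper's Proposition~\ref{au moins deux generateurs de degree impair}, and it is sound in spirit: the case $dy=0$ forces powers of an even generator to survive in cohomology, and the case $dy\neq 0$ reduces, via the Koszul quasi-isomorphism of Property~\ref{complexes de Koszul a une variable} and pushout invariance, to $H^*(M;\Q)\cong \kk[x_1,\dots,x_p]/(dy)\otimes\Lambda(\dots)$, whose finiteness forces monogenicity. (Two small repairs: when $P=dy=0$ and there is at least one even generator, $H^*$ is infinite dimensional regardless of whether the $g_i$ vanish; and the identification $H^*(M)\cong\kk[x]/(dy)$ needs the Koszul/pushout argument, not just the formula for $d$ on generators.)

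The second step, however, contains a genuine gap, which you yourself flag: the elements $\omega\cdot (sy_1)^a(sy_2)^b$ are not $\delta$-cocycles, and nothing in your outline controls the higher differentials of your word-length spectral sequence, so the linear growth visible on $E_1$ is not shown to survive. The paper closes exactly this gap with a mechanism you are missing, and it runs in the opposite logical direction. Rather than pushing growth \emph{up} from the fibre into $H^*(M^{S^1};\Q)$, it pushes the boundedness hypothesis \emph{down} to a quotient: Lemma~\ref{elimination generateur degree pair} shows, via the long exact sequence of $0\to xA\to A\to A/xA\to 0$ for a cocycle $x$ acting injectively, that
$$\dim H^n(A/xA)\leq \dim H^n(A)+\dim H^{n+1-\vert x\vert}(A),$$
so bounded Betti numbers for $A$ force bounded Betti numbers for $A/xA$. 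Applying this repeatedly to the even generators $x_1,\dots,x_n$ of $V$ lying below the two odd generators $y,z$, boundedness for $(\Lambda V\otimes\Lambda sV,\delta)$ would force boundedness for the quotient cdga $(\Lambda(y,z,\dots)\otimes\Lambda sV,\bar\delta)$. But in that quotient the elements $sx_1\cdots sx_n(sy)^p(sz)^q$ \emph{are} honest $\bar\delta$-cocycles (a word-length and degree argument, no spectral sequence needed), they are detected by their linearly independent images in $(\Lambda sV,0)$, and counting them in degrees $\vert sx_1\vert+\dots+\vert sx_n\vert+k\cdot\mathrm{lcm}(\vert sy\vert,\vert sz\vert)$ gives at least $k+1$ classes: contradiction. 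Your proposed remedies (exact Poincar\'e series in the elliptic case, growth of $H^*(\Omega M)$ when $\dim V=\infty$) do not cover the general case and, as you note, leave the real work undone; the elimination lemma is the idea you need.
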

\begin{example}\label{Betti rationel sur le produit de spheres}(Betti numbers of $(S^3\times S^3)^{S^1}$ over
  $\mathbb{Q}$)

Let $V$ and $W$ be two graded vector spaces such 
$\forall n\in\mathbb{N}$, $V^n$ and $W^n$ are finite dimensional.
We denote by $$P_{V}(z):=\sum_{n=0}^{+\infty}(\text{Dim }V^n) z^n$$
the sum of the \emph{Poincar\'e serie} of $V$.\index{Poincar\'e!serie}
If $V$ is the cohomology of a
space
$X$, we denote $P_{H^*(X)}(z)$ simply by $P_{X}(z)$.
Note that $P_{V\otimes W}(z)$ is the product $P_V(z)P_W(z)$.
We saw at the end of section~\ref{exemple de
  model relatif de Sullivan} that
$H^*((S^3)^{S^1};\mathbb{Q})\cong \Lambda v\otimes \Lambda sv$
where $v$ is an element of degree $3$.
Therefore
$$
P_{(S^3)^{S^1}}(z)=(1+z^3)\sum_{n=0}^{+\infty}z^{2n}=\frac{1+z^3}{1-z^2}.
$$
Since the free loops on a product is the product of the free loops
$$
H^*((S^3\times S^3)^{S^1})\cong H^*((S^3)^{S^1})\otimes H^*((S^3)^{S^1}).
$$
Therefore, since $\displaystyle{\frac{1}{1-z^2}=\sum_{n=0}^{+\infty}
  (n+1) z^{2n}}$,
$$
P_{(S^3\times
  S^3)^{S^1}}(z)=\left(\frac{1+z^3}{1-z^2}\right)^2=1+2z^2+\sum_{n=3}^{+\infty}
(n-1) z^n.
$$
So the Betti numbers over $\mathbb{Q}$ of the free loop space on
$S^3\times S^3$, $b_n:=\text{Dim }H^n((S^3\times
S^3)^{S^1};\mathbb{Q})$
are equal to $n-1$ if $n\geq 3$. In particular, they are unbounded.
\end{example}
\begin{conjecture}\label{conjecture geodesiques fermees}
The theorem of Vigu\'e-Poirrier and Sullivan holds replacing $\mathbb{Q}$ by any field $\mathbb{F}$.
\end{conjecture}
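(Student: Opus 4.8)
The characteristic-zero argument is unavailable over a field $\F$ of positive characteristic, since there is no commutative Sullivan model and hence no free loop space model $(\Lambda V\otimes\Lambda sV,\delta)$ as in Theorem~\ref{differentiel du modele de Sullivan des lacets libres}. The plan is to replace the minimal model by a purely algebraic description of free loop space (co)homology valid over every field: the any-field refinements of Corollary~\ref{theoreme de Chen sur lacets libres}, namely Jones' isomorphism $H^*(M^{S^1};\F)\cong HH_*(C^*(M;\F))$ and the Burghelea--Fiedorowicz--Goodwillie isomorphism $H_*(M^{S^1};\F)\cong HH_*(C_*(\Omega M;\F))$. Since Hochschild homology is invariant under quasi-isomorphism, it depends only on the $A_\infty$-structure carried by $H^*(M;\F)$ (resp. by the Pontryagin algebra $H_*(\Omega M;\F)$), so the conjecture becomes an algebraic growth statement about these Hochschild homologies.

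I would organise the proof along the $\F$-elliptic/$\F$-hyperbolic dichotomy for $H_*(\Omega M;\F)$. First note that the hypothesis ``$H^*(M;\F)$ requires at least two generators'' is automatic in the hyperbolic case and is the only non-trivial constraint in the elliptic case: a monogenic finite-dimensional cohomology algebra ($\Lambda x$ or $\F[x]/(x^{n+1})$) forces $M$ to be $\F$-elliptic, so hyperbolicity already implies at least two generators, and the bounded examples of Proposition~\ref{monogene donne Betti bornes} are exactly the monogenic elliptic ones.

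In the hyperbolic case, $H_*(\Omega M;\F)$ grows exponentially and contains a free associative subalgebra on at least two generators; for such an algebra the Hochschild homology is already exponentially large through its cyclic-word part (the Hochschild-degree-zero summand, which for a tensor algebra is the space of cyclic words). I would show that this exponential lower bound survives the spectral sequence $HH_*(H_*(\Omega M;\F))\Rightarrow HH_*(C_*(\Omega M;\F))$, so that $\dim H_n(M^{S^1};\F)$ is unbounded. In the elliptic case, where $H_*(\Omega M;\F)$ has only polynomial growth, I would instead argue with a minimal $A_\infty$-model of $C^*(M;\F)$ and its Connes operator $B$---the algebraic avatar of the suspension $s$ of Theorem~\ref{differentiel du modele de Sullivan des lacets libres}. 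Needing two algebra generators produces a class of even total degree whose powers, multiplied by a surviving class coming from the second generator, yield nonzero Hochschild classes in infinitely many degrees; this is the exact mechanism behind Example~\ref{Betti rationel sur le produit de spheres}, where the two $S^3$-factors give the unbounded Poincar\'e coefficients of $\bigl((1+z^3)/(1-z^2)\bigr)^2$.

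The main obstacle is controlling the passage from the cohomology algebra to the cochain $A_\infty$-algebra in characteristic $p$. Two phenomena must be tamed: the failure of formality, which inserts higher differentials into the spectral sequences above, and the Steenrod/Frobenius operations attached to even-degree generators, which can modify the analogue of $\delta$ and, a priori, cancel the very classes that survive rationally. The heart of the argument is therefore a quantitative lower bound guaranteeing that the infinite family of loop classes cannot be entirely truncated. Securing the sharp $\F$-elliptic/hyperbolic dichotomy (in particular the exponential loop-homology growth in the hyperbolic case) and the $\F$-classification of monogenic-cohomology spaces are the two structural inputs on which this reduction rests.
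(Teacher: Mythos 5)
There is no proof to compare against: the statement you have addressed is stated in the paper as Conjecture~\ref{conjecture geodesiques fermees}, an open problem, and what you have written is a research programme, not a proof. Moreover, your main reduction is precisely the route the paper flags as insufficient: the text observes that Jones' isomorphism $H^n(X^{S^1};\F)\cong HH_{-n}(S^*(X;\F),S^*(X;\F))$ does \emph{not} combine with the algebraic Halperin--Vigu\'e theorem (Theorem~\ref{nombres de Betti homologie de Hochschild}) because $S^*(X;\F)$ is not commutative, and in characteristic $p$ there is no cdga model to replace it. Passing to a minimal $A_\infty$-model of $C^*(M;\F)$ with its Connes operator $B$ renames this obstruction rather than removing it: the entire Vigu\'e-Poirrier--Sullivan mechanism in the proof of Theorem~\ref{nombres de betti lacets libres pas bornes} --- the derivation $s$ with $\delta s+s\delta=0$, the Koszul elimination of even generators via Lemma~\ref{elimination generateur degree pair}, and the surviving cocycles $sx_1\cdots sx_n(sy)^p(sz)^q$ --- uses the strict graded commutativity of $\Lambda V\otimes\Lambda sV$ essentially, and you propose no substitute for it, only the hope that Steenrod/Frobenius phenomena ``can be tamed.'' You yourself label the needed quantitative lower bound ``the heart of the argument''; it is announced, not supplied.

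Two further steps would fail or rest on unavailable inputs. First, in your hyperbolic branch, largeness of $HH_*$ does not pass from a subalgebra to the ambient algebra: there is no functorial lower bound, and already $HH_0(A)=A/[A,A]$ can be small while $A$ contains a free associative algebra on two generators, so ``the cyclic-word part is exponentially large, hence so is $HH_*(C_*(\Omega M;\F))$'' is a non sequitur; controlling the differentials in the spectral sequence $HH_*(H_*(\Omega M;\F))\Rightarrow HH_*(C_*(\Omega M;\F))$ is not a technical nuisance but essentially the whole open problem. Second, the structural inputs you lean on --- a sharp $\F$-elliptic/$\F$-hyperbolic dichotomy for $H_*(\Omega M;\F)$ over an arbitrary field, together with exponential growth and a free subalgebra on two generators in the hyperbolic case --- are theorems over $\Q$ but are not established in this generality in characteristic $p$. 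Your outline is a reasonable map of where the difficulties lie, and correctly identifies Jones' theorem and the $A_\infty$-invariance of Hochschild homology as the right ambient framework, but every step that would actually force unboundedness of $b_n$ is left open; as a proof of the conjecture it has genuine gaps at exactly the points where the conjecture is hard.
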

\begin{example}(Betti numbers of $(S^3\times S^3)^{S^1}$ over
  $\mathbb{F}$)

The calculation of Example~\ref{Betti rationel sur le produit de
  spheres}
over $\mathbb{Q}$ can be extended over any field $\mathbb{F}$ as
follows:
Since $S^3$ is a topological group, the map $\Omega S^3\times
S^3\rightarrow (S^3)^{S^1}$, sending $(w,g)$ to the free loop $t\mapsto
w(t)g$,
is a homeomorphism.
Using Serre spectral sequence\index{Serre!spectral sequence}
(\cite[Proposition 17]{Serre:suitespectrale} or\cite[Chap 9. Sect 7. Lemma
3]{Spanier:livre})
or Bott-Samelson theorem (\cite[Corollary 7.3.3]{SelickP:introhomot}
or~\cite[Appendix 2 Theorem 1.4]{Husemoller:fibb}), the cohomology of
the pointed loops on $S^3$, 
$H^*(\Omega S^3)$ is again isomorphic (as graded vector spaces only!) to the polynomial algebra
$\Lambda sv$ where $sv$ is of degree $2$.
Therefore exactly as over $\mathbb{Q}$,
$H^*((S^3)^{S^1};\mathbb{F})\cong \Lambda v\otimes \Lambda
sv$ where $v$ is an element of degree $3$.
Now the same proof as in Example~\ref{Betti rationel sur le produit de
  spheres}
shows that the Betti numbers over $\mathbb{F}$ of the free loop space on
$S^3\times S^3$, $b_n:=\text{Dim }H^n((S^3\times
S^3)^{S^1};\mathbb{F})$
are again equal to $n-1$ if $n\geq 3$.
\end{example}
In fact, the  theorem of Vigu\'e-Poirrier and Sullivan is completely algebraic:
\begin{theorem}(\cite{Vigue-Sullivan:homtcg} when $\mathbb{F}=\mathbb{Q}$,
\cite[Theorem III p. 315]{Halperin-Vigue:homfls} over any field $\mathbb{F}$)\label{nombres de Betti homologie de Hochschild}
Let $\mathbb{F}$ be a field. Let $A$ be a cdga such that $H^{<0}(A)=0$,
$H^{0}(A)=\mathbb{F}$ and $H^*(A)$ is of finite (total) dimension.
If the algebra $H^*(A)$ requires at least two generators then the
sequence of dimensions of the Hochschild homology\index{Hochschild homology} of $A$, $b_n:=\text{dim } HH_{-n}(A,A)$
is unbounded.
\end{theorem}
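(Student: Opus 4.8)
The plan is to reduce the statement to a growth estimate for the free loop space model and then exhibit unbounded growth. In characteristic zero I would pass to a minimal Sullivan model $(\Lambda V,d)\buildrel{\simeq}\over\rightarrow A$; since Hochschild homology is invariant under quasi-isomorphism of cdgas (as used in the proof of Corollary~\ref{theoreme de Chen sur lacets libres}), one has $HH_*(A,A)\cong HH_*(\Lambda V,\Lambda V)$, and by the same argument together with Proposition~\ref{modele de Sullivan de la multiplication} this is the cohomology of the free loop model $(\Lambda V\otimes\Lambda sV,\delta)$ of Theorem~\ref{differentiel du modele de Sullivan des lacets libres}. Thus $b_n=\dim HH_{-n}(A,A)=\dim H^n(\Lambda V\otimes\Lambda sV,\delta)$, and it suffices to show these dimensions are unbounded whenever $H^*(\Lambda V,d)$ needs at least two algebra generators. (I would assume $H^1(A)=0$, so that $V=V^{\geq 2}$ and Proposition~\ref{modele de Sullivan de la multiplication} applies; the general positive-characteristic case, where no minimal model exists, is the content of Halperin--Vigu\'e and is addressed at the end.)

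I would then split according to whether $V$ is finite-dimensional. If $\dim V=\infty$ (the hyperbolic case), then by Remark~\ref{modele de la fibration des lacets libres} the fibre $\Omega X$ has model $(\Lambda sV,0)$, so $\dim H^n(\Omega X)=\dim(\Lambda sV)^n$ already grows (exponentially, by the structure theory of hyperbolic spaces), and one transfers this to $X^{S^1}$. If $\dim V<\infty$ (the elliptic case), the key observation is that a non-contractible elliptic space whose cohomology has a single algebra generator is, by Proposition~\ref{monogene donne Betti bornes} and the classification of finite monogenic graded algebras, either an odd sphere or a truncated polynomial algebra, and both satisfy $\dim V^{odd}\leq 1$: a single odd generator $y$ can impose only one relation $dy$ on the polynomial algebra $\Lambda V^{even}$, which cannot make it finite-dimensional unless $\dim V^{even}\leq 1$. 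Hence "at least two generators" forces $\dim V^{odd}\geq 2$; choosing two odd generators $y_1,y_2$, their suspensions $sy_1,sy_2$ have even degree, so $\Lambda sV$ contains the polynomial algebra $\mathbb{F}[sy_1,sy_2]$, whose number of monomials $(sy_1)^a(sy_2)^b$ of a fixed total degree already grows linearly.

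The main obstacle is precisely the transfer from the fibre to the total model: the monomials $(sy_1)^a(sy_2)^b$ are \emph{not} $\delta$-cocycles, since $\delta(sy_i)=-s(dy_i)$ lies in $\Lambda^{\geq 1}V\otimes\Lambda sV$, so the fibre growth cannot simply be read off. I would control this with the word-length filtration of $\Lambda V\otimes\Lambda sV$, whose associated spectral sequence is the Serre spectral sequence of the free loop fibration $\Omega X\hookrightarrow X^{S^1}\buildrel{ev}\over\rightarrow X$ with $E_2\cong H^*(X)\otimes H^*(\Omega X)$. Because $H^*(X)$ is finite-dimensional, this spectral sequence has a bounded number of columns and hence only finitely many nonzero differentials; the delicate and central point of Vigu\'e--Poirrier--Sullivan is to verify that these finitely many differentials cannot destroy the polynomial growth, so that $\dim H^n(X^{S^1})$ grows polynomially of degree $\dim V^{odd}-1\geq 1$. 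For elliptic $X$ this is a finite, explicit computation in the (pure) bigraded model, and for products it is already visible from the multiplicativity of Poincar\'e series, as in Example~\ref{Betti rationel sur le produit de spheres} for $S^3\times S^3$.

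Finally, over a field of arbitrary characteristic the minimal-model reduction is unavailable, and I would instead argue directly on the (cyclic) bar complex computing $HH_*(A,A)$: from two indecomposable cohomology classes one builds an infinite family of alternating Hochschild classes and proves their linear independence in homology by a leading-term/filtration argument. This is the approach of Halperin--Vigu\'e and is the genuinely characteristic-free part of the proof.
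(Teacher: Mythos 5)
Your reduction of $HH_*(A,A)$ to the cohomology of the free loop model $(\Lambda V\otimes\Lambda sV,\delta)$, and your observation that finite-dimensionality plus ``at least two generators'' forces $\dim V^{odd}\geq 2$ (the paper's Proposition~\ref{au moins deux generateurs de degree impair}), match the paper's setup. But the core of the argument is missing, and the route you propose for it cannot work as stated. You correctly note that the monomials $(sy_1)^a(sy_2)^b$ are not $\delta$-cocycles and propose to control this with the word-length filtration, asserting that since $H^*(X)$ is finite dimensional the associated (Serre) spectral sequence has finitely many columns, so its ``finitely many nonzero differentials cannot destroy the polynomial growth.'' That assertion is not a lemma you can hope to prove in this generality: the path--loop fibration $\Omega X\hookrightarrow PX\rightarrow X$ has exactly the same $E_2$-term $H^*(X)\otimes H^*(\Omega X)$, the same finitely many columns and the same growing fibre, yet its finitely many differentials destroy everything and $E_\infty=\mathbb{F}$. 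Algebraically, the acyclic closure $(\Lambda V\otimes\Lambda sV,D)$ has the same underlying graded algebra as the free loop model and trivial cohomology. So any proof must use what distinguishes the free loop model, namely the relation $\delta s+s\delta=0$ of Theorem~\ref{differentiel du modele de Sullivan des lacets libres}; deferring the elliptic case to ``a finite, explicit computation'' and the hyperbolic case to an unexplained ``transfer'' from $H^*(\Omega X)$ leaves the theorem unproved in both of your cases.

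The paper's proof fills exactly this hole, uniformly and with no elliptic/hyperbolic split. It argues by contradiction: if the Betti numbers of $(\Lambda V\otimes\Lambda sV,\delta)$ were bounded, then Lemma~\ref{elimination generateur degree pair} --- a long-exact-sequence estimate for the quotient by a non-zero-divisor cocycle --- applied successively to the even generators $x_1,\dots,x_n$ of $V$ would bound the Betti numbers of the quotient $(\Lambda(y,z,\dots)\otimes\Lambda sV,\bar{\delta})$. In that quotient, using $\delta(sv)=-s(dv)$ and the fact that the $sx_i$ are of odd degree (so square to zero), the explicit elements $sx_1\cdots sx_n(sy)^p(sz)^q$ are $\bar{\delta}$-cocycles, and they are linearly independent in cohomology because their images under the further projection to $(\Lambda sV,0)$ are linearly independent; counting those of degree $\vert sx_1\vert+\dots+\vert sx_n\vert+k\cdot\mathrm{lcm}(\vert sy\vert,\vert sz\vert)$ gives at least $k+1$ classes in a single degree, contradicting boundedness. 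This elimination-plus-explicit-cocycle mechanism is precisely what your sketch lacks. Two smaller points: your blanket assumption $H^1(A)=0$ is not among the hypotheses of the statement; and for positive characteristic you only cite Halperin--Vigu\'e, which is all the paper does as well, so there is no gap there relative to the paper.
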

Generalising Chen's theorem
(Corollary~\ref{theoreme de Chen sur lacets libres})
over any field $\mathbb{F}$,
Jones theorem~\cite{JonesJ:Cycheh}  gives the isomorphisms of vector spaces

$$ H^n(X^{S^1};\mathbb{F})\cong HH_{-n}(S^*(X;\mathbb{F}), S^*(X;\mathbb{F})), \quad n\in\mathbb{Z}$$
between the free loop space cohomology of $X$ and the Hochschild
homology\index{Hochschild homology}
of the algebra of singular cochains on $X$.
But since the algebra of singular cochains $S^*(X;\mathbb{F})$ is not
commutative, Conjecture~\ref{conjecture geodesiques fermees}
does not follow from Theorem~\ref{nombres de Betti homologie de Hochschild}.
\subsection{A first result of Sullivan}
In this section, we start by a first result of Sullivan whose simple proof illustrates the technics used
in the proof of Vigu\'e-Poirrier-Sullivan theorem.
\begin{theorem}~\cite{Sullivan:conftokyo}\label{cohomologie des lacets libres pas bornee}
Let $X$ be a simply-connected space such that $H^*(X;\mathbb{Q})$
is not concentrated in degree $0$ and $H^n(X;\mathbb{Q})$ is null for
$n$ large enough. Then on the contrary, $H^n(X^{S^1};\mathbb{Q})\neq 0$ for an infinite set of integers $n$.
\end{theorem}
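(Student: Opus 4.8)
The plan is to work entirely inside the minimal Sullivan model and to reduce the statement to an elementary property of the fibre $\Omega X$, whose cohomology is easy to control, and then to push the resulting infiniteness up to $X^{S^1}$.

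First I would fix a minimal Sullivan model $(\Lambda V,d)$ of $X$; since $X$ is simply connected, $V=V^{\ge 2}$, and by Theorem~\ref{differentiel du modele de Sullivan des lacets libres} the free loop space is modelled by $(\Lambda V\otimes\Lambda sV,\delta)$, while by Remark~\ref{modele de la fibration des lacets libres} the fibre $\Omega X$ is modelled by $(\Lambda sV,0)$, so that $H^*(\Omega X;\mathbb{Q})\cong\Lambda sV$. The first key point is that $V$ must contain a generator of \emph{odd} degree: if $V$ were concentrated in even degrees, then $d$ would vanish for degree reasons and $H^*(X)\cong\Lambda V$ would be an infinite-dimensional polynomial algebra, contradicting the finiteness of $H^*(X;\mathbb{Q})$ together with $V\neq 0$ (which holds because $H^*(X;\mathbb{Q})$ is not concentrated in degree $0$). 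Choosing an odd generator $y\in V$, the element $sy\in sV$ has even degree, the polynomial subalgebra $\mathbb{Q}[sy]\subseteq\Lambda sV$ is infinite-dimensional, and hence $H^n(\Omega X;\mathbb{Q})\neq 0$ for infinitely many $n$.

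The heart of the argument is to transport this infiniteness from the fibre to $X^{S^1}$. Here I would exploit that $\delta$ preserves word-length in the variables $sV$: it restricts to $d$ on $\Lambda V$ and sends $sv\mapsto -s(dv)\in\Lambda^{\ge 1}V\otimes\Lambda^1 sV$, so as a derivation it fixes the $sV$-length grading. Consequently the model splits as a direct sum of subcomplexes $(\Lambda V\otimes\Lambda sV,\delta)=\bigoplus_{j\ge 0}(C_j,\delta)$ with $C_j=\Lambda V\otimes\Lambda^j sV$ (this is the Hodge decomposition of the Hochschild homology appearing in Corollary~\ref{theoreme de Chen sur lacets libres}), whence $H^*(X^{S^1})=\bigoplus_{j\ge 0}H^*(C_j)$. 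Because every generator of $sV$ has degree $\ge 1$, the complex $C_j$ is concentrated in degrees $\ge j$; it therefore suffices to prove that $H^*(C_j)\neq 0$ for infinitely many $j$, as this automatically produces nonzero cohomology of $X^{S^1}$ in arbitrarily high degrees.

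To obtain the non-vanishing of infinitely many Hodge pieces I would bring in the Serre spectral sequence of the evaluation fibration $\Omega X\to X^{S^1}\to X$ (with projection $ev$), whose $E_2$-term is $H^*(X)\otimes H^*(\Omega X)$. The decisive structural input is that $H^*(X;\mathbb{Q})$ is finite-dimensional, so the spectral sequence has only finitely many nonzero columns; any class is then touched by a differential only finitely often, and the total cancellation is governed by the Poincar\'e-series identity $P_{X^{S^1}}(t)=P_X(t)\,P_{\Omega X}(t)-(1+t)R(t)$ with $R(t)$ of non-negative coefficients. I expect \emph{this} non-cancellation step to be the main obstacle: in the abstract a finite-column spectral sequence can annihilate an infinite fibre, as happens for the path fibration $\Omega S^3\to PS^3\to S^3$, where $H^*(PS^3)$ is finite although $H^*(\Omega S^3)$ is not; so one must use a feature special to the free loop space. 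That feature is the section of $ev$ given by the constant loops $c\colon X\to X^{S^1}$, modelled by the projection $(\Lambda V\otimes\Lambda sV,\delta)\to(\Lambda V,d)$ killing $sV$; it yields the splitting $H^*(X^{S^1})\cong H^*(X)\oplus H^*(\Lambda V\otimes\Lambda^{\ge 1}sV,\delta)$ and, combined with the finite column range, prevents the powers of the even class carried by $sy$ from all transgressing to zero. Quantifying this — tracking column by column that the surviving quotients in the pieces $C_j$ cannot all vanish for large $j$ — is the crux; once it is in place the theorem follows.
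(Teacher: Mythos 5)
Your setup is sound and partially coincides with the paper's proof: the existence of an odd-degree generator $y\in V$ (your argument is exactly the paper's), the model $(\Lambda V\otimes\Lambda sV,\delta)$ from Theorem~\ref{differentiel du modele de Sullivan des lacets libres}, and your observation that $\delta$ preserves the $sV$-wordlength is correct, so the splitting into subcomplexes $C_j=\Lambda V\otimes\Lambda^j sV$ and the reduction to ``$H^*(C_j)\neq 0$ for infinitely many $j$'' are legitimate. But the proposal stops exactly where it must deliver: you never prove that infinitely many of the $C_j$ have nonzero cohomology, and you say so yourself (``Quantifying this \dots is the crux''). Neither of the two mechanisms you offer can close this. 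The finite column range of the Serre spectral sequence together with the Poincar\'e-series identity cannot rule out total cancellation above a fixed degree --- your own path-fibration example $\Omega S^3\to PS^3\to S^3$ shows this. And the constant-loop section of $ev$ only gives that $H^*(ev)$ is injective, i.e.\ that the piece $C_0=(\Lambda V,d)$ splits off as a direct summand; it says nothing whatsoever about the pieces $C_j$ with $j\geq 1$, which are the only possible source of classes in unbounded degrees, since $H^*(X;\mathbb{Q})$ is finite dimensional.

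What is missing is a chain-level detection argument, and it is precisely what the paper's proof supplies, using the fiber rather than the section. Order a basis of $V$ by degree as $x_1,\dots,x_m,y,\dots$ with $y$ the first odd-degree generator. Each $dx_i$ is an odd-degree element of the evenly graded algebra $\Lambda(x_1,\dots,x_{i-1})$, hence $dx_i=0$ and $\delta(sx_i)=-s\,dx_i=0$; and $dy\in\Lambda^{\geq 2}(x_1,\dots,x_m)$ gives $\delta(sy)=-s\,dy\in\Lambda^{\geq 1}(x_1,\dots,x_m)\otimes\Lambda^1(sx_1,\dots,sx_m)$. Since the $sx_i$ have odd degree, $(sx_i)^2=0$, so $sx_1\cdots sx_m\,\delta(sy)=0$, and since $sy$ has even degree, $\omega_p:=sx_1\cdots sx_m(sy)^p$ is a $\delta$-cocycle for every $p\geq 0$. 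Now apply the chain map $(\Lambda V\otimes\Lambda sV,\delta)\to(\Lambda sV,0)$ of Remark~\ref{modele de la fibration des lacets libres} (the quotient killing $\Lambda^{\geq 1}V$, which models the fiber inclusion $\Omega X\hookrightarrow X^{S^1}$): the image of $\omega_p$ is $sx_1\cdots sx_m(sy)^p\neq 0$, and because the target has \emph{zero} differential this image is already a nonzero cohomology class; hence $[\omega_p]\neq 0$ in $H^*(X^{S^1};\mathbb{Q})$. Since $|sy|=|y|-1\geq 2$, the degrees $|sx_1|+\dots+|sx_m|+p\,|sy|$ are unbounded, which finishes the proof --- and, in your language, exhibits $H^*(C_{m+p})\neq 0$ for all $p$. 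The moral discrepancy with your plan: the section of $ev$ detects the base summand $C_0$, whereas what must be detected is the fiber direction, and for that the correct tool is the projection onto the fiber model.
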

\begin{proof}
Let $(\Lambda V,d)$ be a minimal Sullivan model of $X$.
Suppose that $V$ is concentrated in even degree. Then $d=0$.
Therefore $H^*(\Lambda V,d)=\Lambda V$ is either concentrated in degree $0$
or is not null for an infinite sequence of degrees.
By hypothesis, we have excluded theses two cases. Therefore
$\text{dim }V^{odd}\geq 1$.

Let $x_1$, $x_2$, \dots, $x_m$, $y$, $x_{m+1}$, .....
be a basis of $V$ ordered by degree where $y$ denotes the first generator
of odd degree ($m\geq 0$).
For all $1\leq i\leq m$, $dx_i\in\Lambda x_{<i}$.
But $dx_i$ is of odd degree and $\Lambda x_{<i}$ is concentrated in even
degre. So $dx_i=0$.
Since $dy\in \Lambda x_{\leq m}$, $dy$ is equal to a polynomial $P(x_1,\dots,x_m)$
which belongs to $\Lambda^{\geq 2}(x_1,\dots,x_m)$.

Consider  $(\Lambda V\otimes\Lambda sV,\delta)$, the Sullivan model of $X^{S^1}$,
 given by Theorem~\ref{differentiel du modele de Sullivan des lacets libres}.
We have $\forall 1\leq i\leq m$, $\delta(sx_i)=-sdx_i=0$
and $\delta (sy)=-sdy\in \Lambda^{\geq 1}(x_1,\dots,x_m)\otimes \Lambda^{1}(sx_1,\dots,sx_m)$.
Therefore, since $sx_1$,\dots,$sx_m$ are all of odd degree, $\forall p\geq 0$,
$$\delta (sx_1\dots sx_m(sy)^p)=\pm sx_1\dots sx_m p\delta(sy)(sy)^{p-1}=0.$$
For all $p\geq 0$, the cocycle $sx_1\dots sx_m(sy)^p$ gives a non trivial
cohomology class in $H^*(X^{S^1};\mathbb{Q})$, since by Remark~\ref{modele de la fibration des lacets libres}, the image of this cohomology class in $H^*(\Omega X;\mathbb{Q})\cong \Lambda V$ is different from
$0$.
\end{proof}
\subsection{Dimension of $V^{odd}\geq 2$}
In this section, we show the following proposition:
\begin{proposition}\label{au moins deux generateurs de degree impair}
Let $X$ be a simply connected space such that $H^*(X;\mathbb{Q})$
is of finite (total) dimension and requires at least two generators.
Let $(\Lambda V,d)$ be the minimal Sullivan model of $X$.
Then $\text{dim }V^{odd}\geq 2$. 
\end{proposition}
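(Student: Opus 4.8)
The plan is to argue by contradiction: assuming $\dim V^{odd}\le 1$, I will show that $H^*(\Lambda V,d)\cong H^*(X;\Q)$ is either infinite-dimensional or generated by a single element, and in both cases one of the two hypotheses (finite total dimension, respectively at least two generators) fails. Throughout I would exploit three structural facts: $X$ simply connected gives $V=V^{\geq 2}$; minimality of the model gives $d(V)\subset\Lambda^{\geq 2}V$ (since the base is $\Q$); and $A:=\Lambda V^{even}$ is an ordinary polynomial algebra, hence an integral domain, with $\Lambda V=A\oplus A\,y$ as soon as $\dim V^{odd}\le 1$, where $y$ (if present) is the unique odd generator, so that $y^2=0$ and every odd element lies in $A\,y$.

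First I would dispose of $\dim V^{odd}=0$ exactly as in the proof of Theorem~\ref{cohomologie des lacets libres pas bornee}: if $V$ is purely even then $d$ raises degree by one into $\Lambda V$, which is concentrated in even degrees, forcing $d=0$ and $H^*(\Lambda V,d)=\Lambda V^{even}$. This is $\Q$ when $V=0$ (zero generators, excluded since at least two are required) and an infinite-dimensional polynomial algebra otherwise (excluded by finiteness). Hence $\dim V^{odd}\geq 1$, and it remains to rule out $\dim V^{odd}=1$. In that case write $d x_\alpha=a_\alpha y$ for the even generators $x_\alpha$ (forced, as $d x_\alpha$ is odd) and $dy=P\in A$. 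A short computation gives $d^2 x_\alpha=a_\alpha P$, so since $A$ is a domain, either $P=0$ or every $a_\alpha=0$.

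If $P\neq 0$, then all $d x_\alpha=0$ and the complex is a Koszul-type complex whose cohomology is $A/(P)$ in even degrees and $0$ in odd degrees (multiplication by $P$ is injective on the domain $A$). Finiteness of $A/(P)$ then forces exactly one even generator $x$ with $P$ a power $x^k$ — otherwise the Hilbert series of $A/(P)$ retains a pole at $t=1$ and the quotient is infinite-dimensional — so that $H^*\cong\Q[x]/(x^k)$ is monogenic, contradicting the two-generator hypothesis.

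The remaining and hardest case is $dy=0$. Here $d|_{Ay}=0$, so there are no even coboundaries and $H^{even}=\ker\theta$ for the derivation $\theta=\sum_\alpha a_\alpha\partial_\alpha$ of $A$. The main obstacle I anticipate is showing this kernel is infinite-dimensional. I would handle it through the minimal even generator $x_1$: the relation $|a_1|=|x_1|+1-|y|$ gives $|a_1|\le|x_1|-2<|x_1|$, while minimality forces $|a_1|\geq 2$ whenever $a_1\neq0$ and all nonconstant monomials of $A$ have degree $\geq|x_1|$; these are incompatible, so $a_1=0$ and $\theta(x_1)=0$. Consequently $x_1^k\in\ker\theta$ for every $k$ supplies infinitely many nonzero classes, contradicting finiteness (and if instead $V^{even}=0$ then $H^*=\Lambda(y)$ is monogenic, again contradicting the hypothesis). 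Assembling the cases yields $\dim V^{odd}\geq 2$.
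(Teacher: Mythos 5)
Your proof is correct, and it shares the paper's skeleton---reduce to $\dim V^{odd}=1$, then split on whether $dy$ vanishes---but in the hard case $dy\neq 0$ your route is genuinely different. The paper orders the generators by degree, notes that those of degree below $\vert y\vert$ are closed, applies its Koszul-complex Property~\ref{complexes de Koszul a une variable} to get a quasi-isomorphism $\Lambda(x_1,\dots,x_m,y)\buildrel{\simeq}\over\rightarrow\Lambda(x_1,\dots,x_m)/(dy)$, and then pushes this out along the relative Sullivan model $\Lambda(x_1,\dots,x_m,y)\hookrightarrow(\Lambda V,d)$, invoking \cite[Lemma 14.2]{Felix-Halperin-Thomas:ratht}; the pushout is needed precisely because the even generators of degree above $\vert y\vert$ could a priori have nonzero differentials. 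Your identity $d^2x_\alpha=a_\alpha\,dy$ in the integral domain $A=\Lambda V^{even}$ removes exactly that difficulty: $dy\neq 0$ forces \emph{every} even generator to be closed, so $(\Lambda V,d)$ is itself the Koszul complex, $H^*(X)\cong A/(dy)$ on the nose, and your Hilbert-series argument replaces the paper's count of generators. What your approach buys is a self-contained, more elementary argument (no Koszul Property, no base-change lemma as external inputs); what the paper's buys is coherence with the pushout technique reused throughout the chapter (notably in the proof of Theorem~\ref{nombres de betti lacets libres pas bornes}) and the finer algebra identification $H^*(X)\cong\Lambda(x_1,\dots,x_m)/(dy)\otimes\Lambda(x_{m+1},\dots)$. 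In the case $dy=0$ the two proofs coincide in substance---a closed even generator of minimal degree whose powers give infinitely many classes---with your degree count establishing $dx_1=0$ and your remark that there are no even coboundaries standing in for, respectively, the paper's ordering argument $dx_i\in\Lambda x_{<i}$ and its observation that $x_1^n$ does not lie in the ideal $(y)$.
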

\begin{property}(Koszul complexes)\label{complexes de Koszul a une
    variable}
\index{Koszul complexes}
Let $A$ be a graded algebra.
Let $z$ be a central element of even degree of $A$ which is not a divisor of zero.
Then we have a quasi-isomorphism of dgas
$$
(A\otimes\Lambda sz,d)\buildrel{\simeq}\over\twoheadrightarrow A/z.A\quad a\otimes 1\mapsto a, a\otimes sz\mapsto 0,
$$
where $d(a\otimes 1)=0$ and  $d(a\otimes sz)=(-1)^{\vert a\vert}az$
 for all $a\in A$.
\end{property}
\begin{proof}[Proof of Proposition~\ref{au moins deux generateurs de
    degree impair} (following $(2)\Rightarrow (3)$ of p. 214 of~\cite{Felix-Oprea-Tanre:algmodgeom})]
As we saw in the proof of Theorem~\ref{cohomologie des lacets libres pas bornee},
there is at least one generator $y$ of odd degree, that is $\text{dim }V^{odd}\geq 1$.
Suppose that there is only one.
Let $x_1$, $x_2$, \dots, $x_m$, $y$, $x_{m+1}$,\dots be a basis of $V$ ordered by degree ($m\geq 0$).

First case: $dy=0$. If $m\geq 1$, $dx_1=0$.
If $m=0$, $dx_1\in\Lambda^{\geq 2}(y)=\{0\}$ and therefore again $dx_1=0$. Suppose that for $n\geq 1$, $x_1^n$ is
a coboundary. Then $x_1^n=d(yP(x_1,\dots))=yd(P(x_1,\dots))$
where $P(x_1,\dots)$ is a polynomial in the $x_i$'s. But this is impossible since $x_1^n$
does not belong to the ideal generated by $y$.
Therefore for all $n\geq 1$, $x_1^n$ gives a non trivial cohomology class in $H^*(X)$.
But $H^*(X)$ is finite dimensional.

Second case: $dy\neq 0$. In particular $m\geq 1$. Since $dy$ is a non zero polynomial,
$dy$ is not a zero divisor, so by Property~\ref{complexes de Koszul a une variable}, we have a quasi-isomorphism of
cdgas
$$\Lambda(x_1,\dots,x_m,y)\buildrel{\simeq}\over\twoheadrightarrow\Lambda(x_1,\dots,x_m)/(dy).$$
Consider the push out in the category of cdgas
$$
\xymatrix{
\Lambda(x_1,\dots,x_m,y)\ar[r]\ar[d]_\simeq
& \Lambda(x_1,\dots,x_m,y,x_{m+1},\dots),d\ar[d]\\
\Lambda(x_1,\dots,x_m)/(dy)\ar[r]
&  \Lambda(x_1,\dots,x_m)/(dy)\otimes \Lambda(x_{m+1},\dots),\bar{d}
}$$
Since $\Lambda(x_1,\dots,x_m)/(dy)\otimes \Lambda(x_{m+1},\dots)$ is concentrated in
even degrees, $\bar{d}=0$.
Since the top arrow is a Sullivan relative model and the left arrow is a quasi-isomorphism,
the right arrow is also a quasi-isomorphism
(\cite[Lemma 14.2]{Felix-Halperin-Thomas:ratht}, or more generally the category of cdgas over $\mathbb{Q}$
is a Quillen model category).
Therefore the algebra $H^*(X)$ is isomorphic to
$\Lambda(x_1,\dots,x_m)/(dy)\otimes \Lambda(x_{m+1},\dots)$.
If $m\geq 2$, $\Lambda(x_1,\dots,x_m)/(dy)$ and so $H^*(X)$ is infinite dimensional. 
If $m=1$, since $\Lambda x_1/(dy)$ is generated by only one generator, we must have another
generator $x_2$. But $\Lambda(x_1)/(dy)\otimes \Lambda(x_{2},\dots)$ is also infinite dimensional.
\end{proof}
\subsection{Proof of Vigu\'e-Poirrier-Sullivan theorem}\label{proofofViguePoirrierSullivantheorem}
\begin{lemma}~\cite[Proposition 4]{Vigue-Sullivan:homtcg}\label{elimination generateur degree pair}
Let $A$ be a dga over any field such that the multiplication by a cocycle $x$ of any degre
$A\rightarrow A$, $a\mapsto xa$ is injective (Our example will be $A=(\Lambda V,d)$ and $x$
a non-zero  element of $V$ of even degree such that $dx=0$).
If the Betti numbers $b_n=\text{dim } H^n(A)$ of $A$ are bounded then the Betti numbers
$b_n=\text{dim } H^n(A/xA)$ of $A/xA$ are also bounded.
\end{lemma}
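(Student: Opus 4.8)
The plan is to deduce boundedness of the Betti numbers of $A/xA$ from a long exact sequence in cohomology produced by multiplication by $x$.

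First I would note that since $x$ is a cocycle, the derivation property gives $d(xa)=(dx)a+(-1)^{\vert x\vert}x(da)=(-1)^{\vert x\vert}x(da)$, so multiplication by $x$ commutes with the differential up to the sign $(-1)^{\vert x\vert}$ and is thus a morphism of complexes (the sign is harmless, and is in any case absent in the intended application where $x\in V$ has even degree). Writing $k=\vert x\vert$, this map raises degree by $k$, so after shifting $A$ by $k$, with $(A[-k])^n=A^{n-k}$, it becomes a degree-zero chain map $A[-k]\to A$. By hypothesis it is injective, and its image in degree $n$ is exactly $(xA)^n$, so its cokernel complex is $A/xA$ equipped with the induced differential. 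This produces a short exact sequence of complexes
$$0\longrightarrow A[-k]\xrightarrow{\;\cdot x\;} A\longrightarrow A/xA\longrightarrow 0.$$

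Next I would run the associated long exact sequence in cohomology, using $H^n(A[-k])=H^{n-k}(A)$:
$$\cdots\to H^{n-k}(A)\xrightarrow{\;\cdot x\;}H^n(A)\to H^n(A/xA)\to H^{n+1-k}(A)\xrightarrow{\;\cdot x\;}H^{n+1}(A)\to\cdots.$$
Exactness at $H^n(A/xA)$ presents it as an extension
$$0\to\coker(\cdot x)\to H^n(A/xA)\to\Ker(\cdot x)\to 0,$$
where the cokernel is that of $\cdot x\colon H^{n-k}(A)\to H^n(A)$ and the kernel that of $\cdot x\colon H^{n+1-k}(A)\to H^{n+1}(A)$. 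Since the cokernel is a quotient of $H^n(A)$ and the kernel a subspace of $H^{n+1-k}(A)$,
$$\dim H^n(A/xA)\le \dim H^n(A)+\dim H^{n+1-k}(A)=b_n+b_{n+1-k}.$$
If every $b_n$ is bounded by a constant $C$, this gives $\dim H^n(A/xA)\le 2C$ for all $n$, which is precisely the asserted boundedness.

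I do not expect a genuine obstacle in this argument: everything reduces to building the short exact sequence and reading off the long exact sequence. The only points that require care are bookkeeping ones --- keeping the degree shift straight so that the connecting homomorphism lands in $H^{n+1-k}(A)$, and checking that the sign $(-1)^{\vert x\vert}$ in the chain-map relation affects neither the exactness nor the dimension count. The injectivity hypothesis enters exactly once but crucially: it guarantees that the left-hand term of the short exact sequence is all of $A[-k]$, with no kernel to correct for, and it is this that makes the clean two-term estimate $b_n+b_{n+1-k}$ available.
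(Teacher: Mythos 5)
Your proof is correct and is essentially the paper's own argument: the paper uses the same short exact sequence $0\to xA\to A\to A/xA\to 0$ (with $xA$ identified, via the injectivity hypothesis, with the shift $A[-\vert x\vert]$, exactly your $A[-k]$), the same long exact sequence, and the same estimate $\dim H^n(A/xA)\leq \dim H^n(A)+\dim H^{n+1-\vert x\vert}(A)$. Your treatment is slightly more detailed on the sign bookkeeping and the extension argument, but there is no substantive difference.
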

\begin{proof}
Since $H^n(xA)\cong H^{n-\vert x\vert}(A)$, the short exact sequence of complexes
$$
0\rightarrow xA\rightarrow A\rightarrow A/xA\rightarrow 0
$$
gives the long exact sequence in homology
$$
\dots\rightarrow H^n(A)\rightarrow H^n(A/xA)\rightarrow H^{n+1-\vert x\vert}(A)\rightarrow\dots
$$
Therefore $\text{dim }H^n(A/xA)\leq \text{dim } H^n(A)+\text{dim } H^{n+1-\vert x\vert}(A)$
\end{proof}
\begin{proof}[Proof of Vigu\'e-Poirrier-Sullivan theorem (Theorem~\ref{nombres de betti lacets libres pas bornes})]
Let $(\Lambda V,d)$ be the minimal Sullivan model of $X$.
Let $(\Lambda V\otimes\Lambda sV,\delta)$ be the Sullivan model of $X^{S^1}$ given by
Theorem~\ref{differentiel du modele de Sullivan des lacets libres}.
From Proposition~\ref{au moins deux generateurs de degree impair},
we know that $\text{dim }V^{odd}\geq 2$.
Let $x_1$, $x_2$, \dots, $x_m$, $y$, $x_{m+1}$,\dots, $x_n$,
$z=x_{n+1}$, \dots 
be a basis of $V$ ordered by degrees
where $x_1$,\dots, $x_n$ are of even degrees
and $y$, $z$ are of odd degrees.
Consider the commutative diagram of cdgas where the three rectangles are push outs
$$
\xymatrix{
\Lambda(x_1,\dots, x_n)\ar[r]\ar[d]
&(\Lambda V,d)\ar[r]\ar[d]
&(\Lambda V\otimes\Lambda sV,\delta)\ar[d]\\
\mathbb{Q}\ar[r]
&\Lambda(y,z,\dots)\ar[r]\ar[d]
&(\Lambda(y,z,\dots)\otimes\Lambda sV,\bar{\delta})\ar[d]\\
&\mathbb{Q}\ar[r]
&(\Lambda sV,0)
}
$$
Note that by Remark~\ref{modele de la fibration des lacets libres},
the differential on $\Lambda sV$ is $0$.

For all $1\leq j\leq n+1$, $$\delta x_j=dx_j\in \Lambda^{\geq 2}(x_{<j},y)\subset
\Lambda^{\geq 1}(x_{<j})\otimes \Lambda y.$$
Therefore $$\delta (sx_j)=-s\delta x_j\in \Lambda x_{<j}\otimes\Lambda^1 sx_{<j}\otimes\Lambda y+\Lambda^{\geq 1}(x_{<j})\otimes \Lambda^1 sy.$$
Since $(sx_1)^2=\dots=(sx_{j-1})^2=0$, the product
$$sx_1\dots sx_{j-1}  \delta (sx_j)\in \Lambda^{\geq 1}(x_{<j})\otimes \Lambda^1 sy.$$
So $\forall 1\leq j\leq n+1$, $sx_1\dots sx_{j-1}  \bar{\delta} (sx_j)=0$.
In particular $sx_1\dots sx_{n}  \bar{\delta} (sz)=0$.
Similarly, since $dy\in\Lambda^{\geq 2} x_{\leq m}$,
$sx_1\dots sx_m\delta(sy)=0$ and so $sx_1\dots sx_n\bar{\delta}(sy)=0$.
By induction, $\forall 1\leq j\leq n$, $\bar{\delta}(sx_1\dots sx_j)=0$.
In particular, $\bar{\delta}(sx_1\dots sx_n)=0$.
So finally, for all $p\geq 0$ and all $q\geq 0$,
$\bar{\delta}(sx_1\dots sx_n(sy)^p(sz)^q)=0$.
The cocycles $sx_1\dots sx_n(sy)^p(sz)^q$, $p\geq 0$, $q\geq 0$, give
linearly independent cohomology classes in $H^*(\Lambda(y,z,\dots)\otimes\Lambda sV,\bar{\delta})$ since their images in $(\Lambda sV,0)$ are linearly independent.

For all $k\geq 0$, there is at least $k+1$ elements of the form $sx_1\dots sx_n(sy)^p(sz)^q$
in degree $\vert sx_1\vert+ \dots+\vert sx_n\vert+k\cdot\text{lcm}(\vert sy\vert,\vert sz\vert)$
(just take $p=i\cdot\text{lcm}(\vert sy\vert,\vert sz\vert)/\vert sy\vert$ and
$q=(k-i)\text{lcm}(\vert sy\vert,\vert sz\vert)/\vert sz\vert$ for $i$ between $0$ and $k$).
Therefore the Betti numbers of $H^*(\Lambda(y,z,\dots)\otimes\Lambda sV,\bar{\delta})$ are unbounded.

Suppose that the Betti numbers of $(\Lambda V\otimes\Lambda sV,\delta)$ are bounded.
Then by Lemma~\ref{elimination generateur degree pair} applied to $A=(\Lambda V\otimes\Lambda sV,\delta)$ 
and $x=x_1$, the Betti numbers of the quotient cdga $(\Lambda(x_2,\dots)\otimes\Lambda sV,\bar{\delta})$
are bounded.
By continuing to apply Lemma~\ref{elimination generateur degree pair} to $x_2$, $x_3$, \dots, $x_n$,
we obtain that the Betti numbers of the quotient cdga $(\Lambda(y,z,\dots)\otimes\Lambda sV,\bar{\delta}$
are bounded. But we saw just above that they are unbounded.
\end{proof}
\section{Further readings}
In this last section, we suggest some further readings that we find appropriate for the student.

In~\cite[Chapter 19]{Bott-Tu:difforms}, one can find a very short and
gentle introduction to rational homotopy that the reader should
compare to our introduction.

In this introduction, we have tried to explain that rational homotopy
is a functor which transforms homotopy pullbacks of spaces into
homotopy pushouts of cdgas.
Therefore after our introduction,
we advise the reader to look at~\cite{Hess:introrationalhtpy}, a more advanced
introduction to rational homotopy, which explains the
model category of cdgas.

The canonical reference for rational
homotopy~\cite{Felix-Halperin-Thomas:ratht} is highly readable.

In the recent book~\cite{Felix-Oprea-Tanre:algmodgeom},
you will find many geometric applications of rational homotopy.
The proof of Vigu\'e-Poirrier-Sullivan theorem  we give here, follows more or less the proof given
in~\cite{Felix-Oprea-Tanre:algmodgeom}.

We also like~\cite{TanreD:homrmc} recently reprinted
because it is the only book where you can find the Quillen model
of a space: a differential graded Lie algebra representing its
rational homotopy type (instead of a commutative
differential graded algebra as the Sullivan model).
\bibliography{Bibliographie.bib}
\bibliographystyle{amsplain}
\printindex
\end{document}